\newtheorem{theorem}{\bf Theorem}[section]
\newtheorem{prop}[theorem]{\bf Proposition}
\newtheorem{remark}[theorem]{\bf Remark}
\def\no{\noindent}
\newcommand{\Rmnum}[1]{\expandafter\@slowromancap\romannumeral #1@}
\numberwithin{equation}{section}
\begin{document}
\bibliographystyle{alpha}
\begin{center}{\LARGE \bf Area-minimizing Cones over Products of Grassmannian Manifolds}
\end{center}

\begin{center}
Xiaoxiang Jiao, Hongbin Cui$^\ast$ and Jialin Xin

School of Mathematical Sciences, University of Chinese Academy of Sciences, Beijing 100049, China,

$^\ast$Corresponding Author

E-mail: xxjiao@ucas.ac.cn; \phantom{,,} cuihongbin16@mails.ucas.ac.cn; \phantom{,,}xinjialin17@mails.ucas.ac.cn

\end{center}

\bigskip

\no
{\bf Abstract.}
This paper is the continuation of the previous one \cite{Cui2021}, where we re-proved the area-minimization of cones over Grassmannians of $n$-planes $G(n,m;\mathbb{F})(\mathbb{F}=\mathbb{R},\mathbb{C},\mathbb{H})$, Cayley plane $\mathbb{O}P^2$ from the point view of Hermitian orthogonal projectors, and gave area-minimizing cones associated to oriented real Grassmannians $\widetilde{G}(n,m;\mathbb{R})$.

In this paper, we make a further step on showing that the cones, of dimension no less than $\mathbf{8}$, over minimal products of $G(n,m;\mathbb{F})$ are area-minimizing. Moreover, those cones are very similar to the classical cones over products of spheres, and for the critical situation---the cones of dimension $\mathbf{7}$ \cite{lawlor1991sufficient}, we gain more area-minimizing cones by carefully computing the Jacobian $inf_{v}det(I-tH^{v}_{ij})$. Certain minimizing cones among them had been found from the perspective of $R$-spaces\cite{Ohno2021area}, or isoparametric theory\cite{tang2020minimizing}, and others are completely new.

We also prove that the cones over minimal product of $\widetilde{G}(n,m;\mathbb{R})$ are area-minimizing.\\

{\bf{Keywords}} Area-minimizing cone, Hermitian matrix, Grassmannian manifold, minimal product, Lawlor's Curvature Criterion\\

\no
{\bf{Mathematics Subject Classification (2020).}} Primary 49Q05; Secondary 53C38, 53C40.

\bigskip

\tableofcontents
\section{Introduction}
Area-minimizing cones are a class of area-minimizing surfaces whose its truncated part inside the unit ball owns the least area among all integral currents with the same boundary in the sphere.

With an effort for searching necessary and sufficient conditions for a cone to be area-minimizing, Gary R. Lawlor(\cite{lawlor1991sufficient}) developed a general method for proving that a cone is area-minimizing, which was called Curvature Criterion by himself.

Lawlor explained his Curvature Criterion by two equivalent concrete objects, the vanishing calibration and the area-nonincreasing retraction, both defined on certain angular neighborhood of the cone rather than in the entire Euclidean space. They are linked by the fact that the tangent space of retraction surface is just the orthogonal complement of the dual of the vanishing calibration. They both derive an ordinary differential equation($ODE$). The simplified area-minimizing tests include if the $ODE$ has solutions, what is the maximal existence interval of a solution, and then compare it with an important potential of the cone-\textbf{normal radius}. Lawlor also studied those cones which his Curvature Criterion is both necessary and sufficient like the minimal, isoparametric hypercones and the cones over principal orbits of polar group actions. Based on his method, the classification of minimal(area-minimizing, stable, unstable) cones over products of spheres is completed, he also proved some cones over unorientable real projective spaces, compact matrix groups are area-minimizing and gave new proofs of a large class of homogeneous hypercones being area-minimizing.

Other related researches, or researches for the area-minimizing cones associated to Lawlor's Curvature Criterion are: a new twistor calibrated theory and applications for the Veronese cone given by Tim Murdoch\cite{murdoch1991twisted}, also see an new point view of Veronese cone from vanishing calibration\cite{lawlor1995note}; Extending the definition of coflat calibration and the illustration of special Largraigian calibrated cones over compact matrix group which are also shown in \cite{lawlor1991sufficient} are given by Benny N. Cheng\cite{cheng1988area}; Proofs for area-minimization of $\textbf{Lawson-}$
$\textbf{Osserman}$ cones are given by Xiaowei Xu, Ling Yang and Yongsheng Zhang\cite{xu2018new}; Researches on area-minimizing cones associated with isoparametric foliations has been pioneered by Zizhou Tang and Yongsheng Zhang\cite{tang2020minimizing}, etc.

The Grassmannian manifolds are important symmetric spaces which can be endowed with some special geometric structures. Generally, the research objects include the Grassmannian of $n$-plane in $\mathbb{F}^{m}$: $G(n,m;\mathbb{F})$
($\mathbb{F}=\mathbb{R},\mathbb{C},\mathbb{H}$), the Grassmannian of oriented $n$-plane in $\mathbb{R}^{m}$: $\widetilde{G}(n,m;\mathbb{R})$, and there exists an only analogue for $\mathbb{F}=\mathbb{O}$, the Cayley plane: $\mathbb{O}P^2$.

There exists standard embedding maps for those Grassmannian manifolds, except those $\widetilde{G}(n,m;\mathbb{R})(n,m-n\neq 2)$, into Euclidean space which can be seen as isolated orbits of some polar group actions, or standard embedding of symmetric $R$-spaces(\cite{berndt2016submanifolds}). Those maps are minimal, the images lies in the spheres, then an natural question is to ask: Does those minimal cones over the images area-minimizing, stable or unstable?

By considering $G(n,m;\mathbb{R})$ and $G(n,m;\mathbb{C})$ as isolated singular orbits of adjoint actions of special orthogonal groups and special unitary groups(\cite{kerckhove1994isolated}), Michael Kerckhove proved almost all their cones are area-minimizing. From the perspective of symmetric $R$-spaces and their canonical embeddings for $\widetilde{G}(2,2l+1;\mathbb{R})(l\geq 3)$, by constructing area-nonincreasing retractions directly, Daigo Hirohashi, Takahiro Kanno and Hiroyuki Tasaki(\cite{hirohashi2000area}) proved the area-minimization of these cones.

Later on, also from the point view of symmetric $R$-spaces, Takahiro Kanno(\cite{kanno2002area}) proved that all the cones over the Grassmannian of subspaces $G(n,m;\mathbb{F})$(where $\mathbb{F}=\mathbb{R},\mathbb{C},\mathbb{H}$, and except $\mathbb{R}P^2$) and Grassmannian of oriented $2$-planes $\widetilde{G}(2,m;\mathbb{R})(m\geq 5)$ are area-minimizing.

The cone over $\mathbb{O}P^2$ was also shown area-minimizing from the above point view by Shinji Ohno and Takashi Sakai(\cite{Ohno2021area}) as part of their results. Moreover, based on the simple fact: the Riemannian product of minimal immersion(embedding) maps into unit spheres is also a minimal immersion(embedding) into unit sphere after some digital adjustment, they proved that those cones over minimal products of R-spaces which each R-space associate to an minimizing cone in their table(obviously ruled out $\mathbb{R}P^2$) are also area-minimizing.

We also note here, standard embeddings of $\mathbb{F}P^2(\mathbb{F}=\mathbb{R},\mathbb{C},\mathbb{H},\mathbb{O})$ into unit spheres of Euclidean spaces are often called Veronese embeddings, they are focal submanifolds of isoparametric foliations with the number of principal curvatures $g=3$ and of multiplicity $m_{1}=m_{2}=m_{3}=1,2,4,8$. Some embedded oriented real Grassmannians also belong to focal submanifolds associated to isoparametric foliations. In \cite{tang2020minimizing},  Zizhou Tang and Yongsheng Zhang proved that the cones over minimal products of focal submanifold of isoparametric foliations for $g=3,4,6$ and $(m_{1},m_{2})\neq (1,1)$ are area-minimizing.

In an early paper\cite{Cui2021}, we re-proved the area-minimization of cones over Grassmannians of $n$-planes $G(n,m;\mathbb{F})(\mathbb{F}=\mathbb{R},\mathbb{C},\mathbb{H})$, Cayley plane $\mathbb{O}P^2$ from the point view of Hermitian orthogonal projectors, and gave area-minimizing cones associated to the Pl\"{u}cker embedding of oriented real Grassmannians $\widetilde{G}(n,m;\mathbb{R})$. Here, we continue to give a class of area-minimizing cones over products of those Grassmannian manifolds which inspired by the minimal product considered in \cite{Ohno2021area}, \cite{tang2020minimizing}, \cite{choe2018some} independently.

\vspace{0.5cm}
$\mathbf{Statement \ of \ Results}$
Under standard embeddings, every Grassmannian under a base field $\mathbb{F}$( $\mathbb{F}=\mathbb{R},\mathbb{C},\mathbb{H}$) is a minimal submanifold in associated dimensional spheres, and for their minimal product, we prove that

\begin{theorem}\label{high dim}
The cone $C$ over $M=\prod_{i=1}^{m}\sqrt{k_{i}}G(l_{i},k_{i};\mathbb{F})$ is area-minimizing, if $dim C>7$.
\end{theorem}

The base fields of Grassmannians can also be different,

\begin{theorem}
The cone $C$ over $M=\prod_{i=1}^{m}\sqrt{d_{i}k_{i}}G(l_{i},k_{i};\mathbb{F}_{i})$ is area-minimizing if $dim C>7$, where $d_{i}=dim_{\mathbb{R}}\mathbb{F}_{i}$.
\end{theorem}

Moreover, the cones of dimension $7$ own the same critical properties which exhibited in \cite{lawlor1991sufficient} for the cases of product spheres, similar, we need to find the concrete expressions for the Jacobian $inf_{v}det(I-tH^{v}_{ij})$ in \eqref{VN}, based on the work of \cite{lawlor1991sufficient}, we successfully establish

\begin{theorem}\label{general seven}
The cones over standard embeddings of types (I)$\mathbb{R}P^2 \times \mathbb{R}P^2  \times \mathbb{R}P^2$, $\mathbb{R}P^2 \times G(2,4;\mathbb{R})$, $\mathbb{R}P^2 \times \mathbb{R}P^4$, $\mathbb{R}P^3 \times \mathbb{R}P^3$, $\mathbb{R}P^2 \times \mathbb{C}P^2$, and moreover (II)$S^2 \times S^2 \times \mathbb{R}P^2$, $S^2 \times \mathbb{R}P^2 \times \mathbb{R}P^2$, $S^2 \times \mathbb{C}P^2$, $S^2 \times \mathbb{R}P^4$, $S^2 \times G(2,4;\mathbb{R})$, $S^4 \times \mathbb{R}P^2$, $S^3 \times \mathbb{R}P^3$ are area-minimizing.
\end{theorem}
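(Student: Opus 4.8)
The plan is to apply Lawlor's Curvature Criterion in the reduced form recorded in \eqref{VN}, where area-minimization of the $7$-dimensional cone is equivalent to a single ODE/integral inequality governed by the Jacobian $g(t)=\inf_{v}\det(I-tH^{v}_{ij})$. Since Theorem~\ref{high dim} (and its companion for mixed base fields) already disposes of every cone of dimension greater than $7$, the content here is precisely the borderline computation: in dimension $7$ the criterion is tight, so rather than a soft estimate one must produce the \emph{exact} value of $g$ and then check the inequality with no slack. The first step is therefore to fix, for each of the twelve products, the standard minimal embedding of $M=\prod_i M_i$ into its sphere --- via the Hermitian-projector model of \cite{Cui2021} --- together with the scaling factors ($\sqrt{k_i}$, $\sqrt{d_ik_i}$) that render the product minimal.

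The second step is to compute the second fundamental form $H^{v}$ of $M$. Because the embedding is a Riemannian product sitting inside $\prod_i S^{N_i}\subset S^{N}$, the operator $H^{v}$ is block diagonal along the factors $TM_i$: the contribution of $M$ inside $\prod_i S^{N_i}$ restricts on each block to the shape operator of the individual standard embedding, while the Clifford-type position of $\prod_i S^{N_i}$ inside the ambient sphere contributes a scalar on each $TM_i$. Hence the Jacobian factors,
\[
\det(I-tH^{v})=\prod_{i}\det\bigl(I-tH^{v_i}\bigr),
\]
and each factor determinant is read off from the known curvature data of the standard embeddings: the Veronese data for $\mathbb{F}P^2$, the projector data for $G(2,4;\mathbb{R})$, $\mathbb{C}P^2$, and $\mathbb{R}P^3=G(1,4;\mathbb{R})$, and the totally geodesic (hence intrinsically vanishing) contribution of the round factors $S^k$ in the type~(II) list.

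The third and decisive step is the infimum over the unit normal $v$. Writing $v=(v_i)$ with $\sum_i|v_i|^2=1$, this is a constrained minimization of a product of factor-polynomials in which the constraint couples the otherwise independent factors; this is exactly where the higher codimension of the projective and Grassmannian factors makes the problem genuinely multidimensional, unlike Lawlor's product-of-spheres setting in which each factor carries a single normal. I expect the minimum to be attained either at a pure direction (all of $v$ concentrated in one factor) or at a balanced interior critical point, to be separated by a convexity/Lagrange-multiplier analysis of $\log\det(I-tH^{v})$; the division into types (I) and (II) reflects that the sphere factors enter this optimization differently from the projective/Grassmannian ones. Carrying this out yields the explicit $g(t)$.

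The final step is to substitute $g(t)$ into \eqref{VN} and verify the resulting dimension-$7$ inequality for each listed product, comparing the total turning of the solution curve of the associated ODE with the normal radius of the cone. The main obstacle is this last verification together with the infimum of step three: because dimension $7$ is critical there is no room to spare, so the transcendental inequality must be established sharply --- by exact evaluation where possible and careful estimation otherwise --- for all twelve cases, and one must confirm that each genuinely satisfies rather than narrowly violates the criterion. This also explains why the stated list is precisely these products and not others of the same dimension.
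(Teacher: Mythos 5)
Your outline reproduces the paper's skeleton---block-diagonal second fundamental form along the factors, exact evaluation of $\inf_{v}\det(I-tH^{v}_{ij})$, comparison of twice the vanishing angle with the normal radius---so the approach is the right one. But the proposal stops exactly where the proof begins: the entire content of the theorem lives in your ``third and decisive step,'' and both your prediction of where the infimum sits and your plan for the final verification miss what actually makes the argument close.

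First, the structure of the minimizer. You expect the minimum over $v$ to be attained ``at a pure direction (all of $v$ concentrated in one factor) or at a balanced interior critical point.'' In the paper's case-by-case computations the minimum (for $t$ in the relevant range) is attained on the boundary of the constraint region where \emph{all} the curvature components $\xi_{i}$ of $v=(\xi_{i}-b_{i}x_{i})_{i}$ vanish, so that $v$ is built purely from the position vectors of the factors. This is precisely the configuration that reduces the problem to Lawlor's product-of-spheres optimization, and it forces $\inf_{v}\det(I-tH^{v}_{ij})$ to equal one of his three tabulated sextics: the multiplicity-two function $E(t)=(1-t\sqrt{2})^{2}(1+t/\sqrt{2})^{4}$ in most cases and $G(t)=(1-t)^{3}(1+t)^{3}$ for $\mathbb{R}P^{3}\times\mathbb{R}P^{3}$. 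The whole point of the theorem---and the reason the list is exactly these twelve products---is that the infimum is \emph{not} the multiplicity-one function $F(t)=(1-t\sqrt{5})(1+t/\sqrt{5})^{5}$, which has no vanishing angle and would yield only stability. Your proposal never isolates this dichotomy, and a Lagrange-multiplier or convexity analysis of $\log\det$ alone will not settle it: the paper needs explicit sign analyses of boundary derivatives (e.g.\ of $27ta^{2}-4\sqrt{3}a-4t$) that hold only for $t$ in a restricted window such as $(0,2\sqrt{2}/7)$, outside of which the identification of the minimum breaks down.

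Second, the final verification is not a fresh ODE or total-turning estimate as you describe. Once the infimum is identified with $E(t)$ or $G(t)$, the vanishing angles ($19.9^{\circ}$, resp.\ $19^{\circ}$) are already known from Lawlor's classification of cones over products of spheres; what remains is (i) to check that $\tan$ of the vanishing angle lies inside the interval on which the identification $\inf_{v}\det(I-tH^{v}_{ij})=E(t)$ (or $G(t)$) was actually established, e.g.\ $\tan 19.9^{\circ}=0.362<2\sqrt{2}/7\approx 0.404$, and (ii) to check that twice the vanishing angle is below the normal radius ($60^{\circ}$, resp.\ $\arccos(1/3)$). Point (i) is a genuine subtlety your plan does not anticipate: the exact evaluation you promise is not available on all of $(0,1/\sqrt{5})$, and without the interval check the argument is incomplete.
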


\begin{remark}
Links in Types (I) are product manifolds where each factors are embedded Grassmannian submanifolds, Types (II) are product manifolds which owns reduced factors, and they are all at least two-dimensional spheres.
\end{remark}

Our investigation for the minimal products of oriented real Grassmannians leads to
\begin{theorem}
Cones over minimal product of $Pl\ddot{u}cker \ embedded$, oriented real Grassmannians are area-minimizing.
\end{theorem}

\begin{remark}
In \cite{Ohno2021area}, they proved that those cones over minimal products of R-spaces which each R-space associate to an minimizing cone in their table(ruled out $\mathbb{R}P^2$) are also area-minimizing. In contrast, the factor of Veronese embedded $\mathbb{R}P^2$ is not included in their results, while some embedded oriented real Grassmannians were shown in their results as $R$-spaces. And for Grassmannian of oriented $2$-planes(can be identified with complex hyperquadric), our results are coincided with theirs. Similar results are also established in \cite{tang2020minimizing} independently from the perspective of isoparametric theory.
\end{remark}

This article is organized as follows:

In chapter 2, we give illustration of minimal product and the concrete formula for its norm of sharp operator. Then, we review some known results for the standard embedding of Grassmannians into spheres and part of the work of Gary R. Lawlor in \cite{lawlor1991sufficient}.

In chapter 3, we study the minimal product of Grassmannian manifolds, then we give the proof for area-minimizing of cones which own the dimension greater than 7.

In chapter 4, we prove Theorem \ref{general seven} through detailed discussions for the minimum values of Jacobian $det(I-tH_{ij}^{v})$.

In chapter 5, we prove the area-minimization for cones over minimal product of oriented real Grassmannians.
\section{Preliminaries}
\subsection{Minimal products}

Given a family of minimal immersion $f_{i}:M_{i}^{k_{i}}\hookrightarrow S^{a_{i}}(1)$($1\leq i \leq m$), the standard Euclidean inner product is denoted by $\langle,\rangle$, then $\langle f_{i},f_{i} \rangle=1$, denote the second fundamental form of $f_{i}$ with respect to the normal vector $v_{i}$ by $H_{i}^{v_{i}}=-\langle df_{i},dv_{i}\rangle$.

Consider the product immersion,
\begin{equation}
\begin{aligned}
f:M=M_{1}^{k_{1}}\times \cdots \times M_{m}^{k_{m}} &\rightarrow S^{a_{1}+\cdots+ a_{m}+m-1}(1)\\
(x_{1},\ldots,x_{m})&\mapsto (\lambda_{1}f_{1}(x_{1}),\ldots,\lambda_{m}f_{m}(x_{m})),
\end{aligned}
\end{equation}
where $\sum_{i=1}^{m}\lambda_{i}^{2}=1$, $dim \ M=k_{1}+\cdots+ k_{m}$.

Now, $df=(\lambda_{1}df_{1},\ldots,\lambda_{m}df_{m})$, set $v=(v_{1},\ldots,v_{m})$, then $v$ is a normal vector of $M$ if and only if $\langle df,v\rangle =0$ and $\langle  f,v\rangle =0$, i.e. in  every Euclidean subspaces $\mathbb{R}^{a_{i}+1}$, we set $v_{i}=\xi_{i}-b_{i}x_{i}$, here $x_{i}$ denote the position vector $f_{i}(x_{i})$, and it should satisfy
\begin{equation}
\sum_{i=1}^{m}\lambda_{i}b_{i}=0.
\end{equation}

Let $H^{v}$ denote the second fundamental forms of $f$ with respect to the normal vector $v$, then
\begin{equation}
\begin{aligned}
H^{v}&=-\langle df,dv\rangle \\
&=-\sum_{i=1}^{m}(\lambda_{i}\langle df_{i},d\xi_{i} \rangle- \lambda_{i}b_{i}\langle dx_{i},dx_{i}\rangle) \\
&=\sum_{i=1}^{m}(\lambda_{i}H_{i}^{\xi_{i}}+\lambda_{i}b_{i}g_{i}),
\end{aligned}
\end{equation}
where $g_{i}=\langle dx_{i},dx_{i}\rangle$ is the induced metric of immersion $f_{i}$.

\begin{prop}
The product $f$ of minimal immersions $f_{i}$($1\leq i \leq m$) is also a minimal immersion if and only if $\lambda_{i}=\sqrt{\frac{k_{i}}{\sum_{i=1}^{m}k_{i}}}$.
\end{prop}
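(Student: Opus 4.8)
The plan is to read off minimality directly from the already-derived expression for the second fundamental form $H^{v}=\sum_{i=1}^{m}(\lambda_{i}H_{i}^{\xi_{i}}+\lambda_{i}b_{i}g_{i})$. Minimality of the product immersion $f$ is the vanishing of its mean curvature, i.e. the vanishing of the trace of $H^{v}$ with respect to the induced metric for \emph{every} admissible normal vector $v=(\xi_{i}-b_{i}x_{i})_{i}$ satisfying the constraint $\sum_{i=1}^{m}\lambda_{i}b_{i}=0$. So the whole statement reduces to computing one trace and analysing when it vanishes on the constraint hyperplane.

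First I would record the induced metric. Since $df=(\lambda_{1}df_{1},\ldots,\lambda_{m}df_{m})$, the metric on $M$ is the block sum $g=\bigoplus_{i}\lambda_{i}^{2}g_{i}$, so tracing against $g^{-1}$ weights the $i$-th block by $\lambda_{i}^{-2}$. Applying this to each summand of $H^{v}$, the terms $\lambda_{i}H_{i}^{\xi_{i}}$ contribute $\lambda_{i}^{-2}\cdot\lambda_{i}\operatorname{tr}_{g_{i}}H_{i}^{\xi_{i}}$, and these vanish identically because each $f_{i}$ is minimal in its sphere $S^{a_{i}}(1)$, so $\operatorname{tr}_{g_{i}}H_{i}^{\xi_{i}}=0$ for every spherical normal $\xi_{i}$. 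The only surviving terms come from $\lambda_{i}b_{i}g_{i}$, whose $g_{i}$-trace is $k_{i}=\dim M_{i}$; weighting by $\lambda_{i}^{-2}$ gives the mean curvature
\begin{equation}
\operatorname{tr}_{g}H^{v}=\sum_{i=1}^{m}\frac{k_{i}b_{i}}{\lambda_{i}}.
\end{equation}

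The final step is linear algebra. Minimality demands $\sum_{i}(k_{i}/\lambda_{i})b_{i}=0$ for all $(b_{1},\ldots,b_{m})$ lying in the hyperplane $\sum_{i}\lambda_{i}b_{i}=0$; a linear functional vanishing on this hyperplane must be proportional to the defining one, forcing $k_{i}/\lambda_{i}=c\,\lambda_{i}$ for a common constant $c$, i.e. $\lambda_{i}^{2}=k_{i}/c$. Imposing $\sum_{i}\lambda_{i}^{2}=1$ pins down $c=\sum_{j}k_{j}$, hence $\lambda_{i}=\sqrt{k_{i}/\sum_{j}k_{j}}$. The converse is immediate by running the identity backwards: with this choice $k_{i}/\lambda_{i}=\lambda_{i}\sum_{j}k_{j}$, so $\operatorname{tr}_{g}H^{v}=(\sum_{j}k_{j})\sum_{i}\lambda_{i}b_{i}=0$ for every admissible $v$, and $f$ is minimal.

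I expect the only genuine subtleties to be bookkeeping rather than deep: getting the inverse-metric weights $\lambda_{i}^{-2}$ right (so that the $k_{i}$ are not accidentally rescaled), and checking that the admissible $v$ really do range over a full set of normal directions, so that imposing vanishing of $\operatorname{tr}_{g}H^{v}$ for all of them is equivalent to minimality. Once those are in place, the proportionality argument on the constraint hyperplane closes the proof in both directions.
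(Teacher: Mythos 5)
Your proposal is correct and follows essentially the same route as the paper: both reduce minimality to the vanishing of $\operatorname{tr}_{g}H^{v}=\sum_{i}k_{i}b_{i}/\lambda_{i}$ for all admissible normals (the paper via an orthonormal-basis block computation, you via tracing against $g^{-1}=\bigoplus_i\lambda_i^{-2}g_i^{-1}$, which is the same calculation). Your explicit linear-algebra step on the constraint hyperplane $\sum_i\lambda_ib_i=0$ is in fact a welcome filling-in of the ``only if'' direction, which the paper merely delegates to Lawlor with the remark that the rest is similar.
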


\begin{proof}
Let $\{e_{i1},\ldots,e_{ik_{i}}\}$ be an orthonormal basis of $(M_{i},g_{i})$, then $\{\ldots,\frac{e_{i1}}{\lambda_{i}},\ldots,\frac{e_{ik_{i}}}{\lambda_{i}},\ldots\}$ is an orthonormal basis of $M$, hence
\begin{equation}
H^{v}\left(\frac{e_{il}}{\lambda_{i}},\frac{e_{is}}{\lambda_{i}}\right)=\sum_{i=1}^{m}\frac{b_{i}
+H_{i}^{\xi_{i}}(e_{il},e_{is})}{\lambda_{i}},
\end{equation}
 where $l,s\in \{1,\ldots,k_{i}\}$, and under the above orthonormal basis, the matrix of $H^{v}$ is given by
\begin{equation}\label{base equation}
H^{v}=diag \{\frac{b_{1}}{\lambda_{1}}I_{k_{1}}+\frac{1}{\lambda_{1}}H_{1}^{\xi_{1}},\ldots,
\frac{b_{m}}{\lambda_{m}}I_{k_{m}}+\frac{1}{\lambda_{m}}H_{m}^{\xi_{m}}\},
\end{equation}

where $H_{i}^{\xi_{i}}$ is the matrix of second fundamental forms of $f_{i}$ on the direction $\xi_{i}$.

Hence, $f$ is a minimal immersion if and only if
\begin{equation} \label{product minimal}
tr \ H^{v}=\sum_{i=1}^{m}\frac{b_{i}k_{i}}{\lambda_{i}}=0
\end{equation}
for any chosen normal vector $v, v_{i}=\xi_{i}-b_{i}x_{i}$, since every $H_{i}^{\xi_{i}}$ has trace zero.

The left proof is similar to \cite{lawlor1991sufficient}, if we choose $\lambda_{i}=\sqrt{\frac{k_{i}}{\sum_{i=1}^{m}k_{i}}}=\sqrt{\frac{dim M_{i}}{dim M}}$, then \eqref{product minimal} holds.
\end{proof}

We continue to compute the norm of $H^{v}$,

\begin{equation}
\begin{aligned}
|H^{v}|^2&=\sum_{i=1}^{m}\frac{k_{i}b_{i}^2+|H_{i}^{\xi_{i}}|^2+2b_{i}tr\ H_{i}^{\xi_{i}}}{\lambda_{i}^2} \\
&=dim \ M\left(\sum_{i}b_{i}^2+\sum_{i}\frac{|\xi_{i}|^2}{k_{i}}|H_{i}^{\frac{\xi_{i}}{|\xi_{i}|}}|^2\right) \\
&=dim \ M\left(1+\sum_{i}|\xi_{i}|^2\left(\frac{1}{k_{i}}|H_{i}^{\frac{\xi_{i}}{|\xi_{i}|}}|^2-1\right)\right) \\
&=dim \ M\cdot max\{1,\frac{\alpha_{1}^2}{k_{1}},\ldots,\frac{\alpha_{m}^2}{k_{m}} \}.
\end{aligned}
\end{equation}
where $\alpha_{i}^2:=sup_{\xi}|H_{i}^{\xi}|^2$, $\xi$ is a unit normal vector of $M_{i}$, $k_{i}=dim \ M_{i}$.

\begin{theorem}\label{forms formula}
The upper bound of second fundamental forms of $f$ is given by $\alpha^2:=sup_{v}|H^{v}|^2=dim \ M\cdot max\{1,\frac{\alpha_{1}^2}{k_{1}},\ldots,\frac{\alpha_{m}^2}{k_{m}} \}$.
\end{theorem}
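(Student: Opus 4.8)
The key point is that the computation displayed just above the statement already expresses $|H^v|^2$, for an \emph{arbitrary} unit normal $v$, purely in terms of the scalars $b_i$ and the magnitudes and directions of the $\xi_i$; what remains is only to carry out the supremum over all admissible $v$. The plan is therefore to recast $\sup_v|H^v|^2$ as an elementary constrained optimization. First I would pin down the parametrization: a unit normal $v=(v_1,\dots,v_m)$ with $v_i=\xi_i-b_ix_i$ satisfies $|v|^2=\sum_i(|\xi_i|^2+b_i^2)=1$ together with the affine relation $\sum_i\lambda_ib_i=0$. Setting $\beta_i:=|\xi_i|^2\ge 0$ and $c:=\sum_ib_i^2\ge 0$, the normalization reads $c+\sum_i\beta_i=1$, and the computation above becomes
\[
|H^v|^2=\dim M\Bigl(c+\sum_{i}\frac{\beta_i}{k_i}\bigl|H_i^{\xi_i/|\xi_i|}\bigr|^2\Bigr).
\]

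Next I would optimize in two stages. For each $i$, with $\beta_i$ held fixed, the only dependence on the \emph{direction} of $\xi_i$ enters through $|H_i^{\xi_i/|\xi_i|}|^2$, which is bounded above by $\alpha_i^2=\sup_\xi|H_i^\xi|^2$ and attains this bound at a maximizing unit normal of $M_i$. This reduces the problem to maximizing the linear functional $c+\sum_i\beta_i\,\alpha_i^2/k_i$ over the simplex $\{\,c,\beta_i\ge 0,\ c+\sum_i\beta_i=1\,\}$. Since a linear function on a simplex attains its maximum at a vertex, and the vertices are $c=1$ (all $\beta_i=0$) and $\beta_j=1$ (all other coordinates zero), the maximal value is $\max\{1,\alpha_1^2/k_1,\dots,\alpha_m^2/k_m\}$. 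Multiplying by $\dim M$ yields the asserted formula.

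The one point demanding genuine care is that, in passing to the simplex, I momentarily discarded the affine constraint $\sum_i\lambda_ib_i=0$, so I must confirm that the optimum is actually realized subject to it. If the maximizing vertex is some $\beta_j=1$, then all $b_i=0$ and the constraint holds trivially. If instead the vertex $c=1$ is maximal, I need a vector $(b_i)$ with $\sum_ib_i^2=1$ and $\sum_i\lambda_ib_i=0$; because $m\ge 2$, the hyperplane $\{\sum_i\lambda_ib_i=0\}$ is at least one-dimensional and its unit sphere is nonempty, so such a choice exists. Hence the constraint never lowers the supremum and the bound is sharp. I expect this compatibility check to be the only real obstacle; everything else is a linear-programming reduction of an expression already in hand.
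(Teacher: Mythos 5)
Your proof is correct and follows essentially the same route as the paper: the displayed computation of $|H^{v}|^2$ already reduces the supremum to maximizing a linear function of $\bigl(\sum_i b_i^2,\,|\xi_1|^2,\dots,|\xi_m|^2\bigr)$ over a simplex, whose maximum is attained at a vertex, giving $\dim M\cdot\max\{1,\alpha_1^2/k_1,\dots,\alpha_m^2/k_m\}$. Your explicit verification that the vertex $\sum_i b_i^2=1$ is compatible with the constraint $\sum_i\lambda_ib_i=0$ when $m\ge 2$ is a small attainability detail the paper leaves implicit, but it does not alter the argument.
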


\subsection{Standard Embedding of Grassmannians into spheres}

This subsection is based on \cite{chen2014total}, a detailed discussion for the spherical, standard embedding of Grassmannians into the Euclidean spaces consists of the Hermitian matrices is given in \cite{Cui2021}.

Let $\mathbb{F}$ denote the field of real numbers $\mathbb{R}$, the field of complex numbers $\mathbb{C}$, the normed quaternion associative algebra $\mathbb{H}$, $d=dim_{R}\mathbb{F}=\{1,2,4\}$.

We use the following notations: $M(m;\mathbb{F})$ denote the space of all $m\times m$ matrices over $\mathbb{F}$, $H(m;\mathbb{F})\subset M(m;\mathbb{F})$ denote the space of all Hermitian matrices over $\mathbb{F}$,
\begin{equation}
H(m;\mathbb{F})=\{A\in M(m;\mathbb{F})|A^{*}=A\}.
\end{equation}

$H(m;\mathbb{F})$ can be identified with real Euclidean space $E^{N}$ with the inner product: \begin{equation}
g(A,B)=\frac{1}{2} Re\ tr_{\mathbb{F}}(AB),
\end{equation}
where $A,B\in H(m;\mathbb{F})$,  $N=m+dm(m-1)/2$.

Let $U(m;\mathbb{F})=\{A\in M(m;F)|AA^{*}=A^{*}A=I\}$ denote the $\mathbb{F}$-unitary group, $U(m;\mathbb{F})$ has an natural adjoint action $\rho$ on $H(m;\mathbb{F})$, given by $\rho(Q,P)=QPQ^{*}$, where $Q\in U(m;\mathbb{F}), P\in H(m;\mathbb{F})$, this action is isometry and transitive.

Let $G(n,m;\mathbb{F})$ denote the Grassmannian of $n$-plane in the right vector space $\mathbb{F}^{m}$, for every $L\in G(n,m;\mathbb{F})$, there exists an Hermitian orthogonal projector $P_{L}$ associated with it, hence give an embedding $\varphi$ of $G(n,m;\mathbb{F})$ into the hypersphere contained in $H(m;\mathbb{F})$(the dimension of the sphere can reduce to $N-2$):
\begin{equation}
\begin{aligned}
\varphi:G(n,m;\mathbb{F})&\rightarrow H(m;\mathbb{F})\\
 L &\mapsto P_{L},
\end{aligned}
\end{equation}
where $P_{L}=P_{L}^{*}$,$P_{L}^{2}=P_{L}$, $tr\ P_{L}=n$ and $L=\{z\in\mathbb{F}^{m}|P_{L}z=z\}$.

This embedding is minimal, and it's cone was shown area-minimizing in \cite{Cui2021}, other perspectives for these cones being area-minimizing can be seen in \cite{kerckhove1994isolated},\cite{hirohashi2000area},
 \cite{kanno2002area},\cite{Ohno2021area}.

\subsection{Gary R. Lawlor's table of Vanishing angles}
Now, we recall Gary R.Lawlor's work in \cite{lawlor1991sufficient}, for the following $ODE$ (see definition 1.1.6 in \cite{lawlor1991sufficient}):
\begin{equation}\label{VN}
\begin{cases}
\frac{dr}{d\theta}=r \sqrt{r^{2k}(cos\theta)^{2k-2}\mathrm{inf}_{v}\left(det(I-tan(\theta) h_{ij}^{v})\right)^{2}-1}\\
r(0)=1,
\end{cases}
\end{equation}
where $h_{ij}^{v}$ is the matrix representation of the second fundamental form of an minimal submanifold $M$ in sphere, $v$ is an unit normal, $k$ is the dimension of cone $C=C(M)$, and $r=r(\theta)$ describe a projection curve, the $ODE$ is build at a fixed point $p\in M$.

Denote the real vanishing angle by $\theta_{0}$(see Definition 1.1.7 in \cite{lawlor1991sufficient}), Lawlor use the following estimates.

Let $\theta_{1}(k,\alpha)$ be the estimated vanishing angle function which replacing $\mathrm{inf}_{v}det(I-tan(\theta) h_{ij}^{v})$ by an smaller positive-valued function
\begin{equation}
F(\alpha,tan(\theta),k-1)=\left(1-\alpha tan(\theta)\sqrt{\frac{k-2}{k-1}}\right)\left(1+\frac{\alpha tan(\theta)}{\sqrt{(k-1)(k-2)}}\right)^{k-2}
\end{equation}
in \eqref{VN}, where the condition $\alpha^2 tan^2(\theta_{1})\leq\frac{k-1}{k-2}$
should be satisfied, and $F$ is an decreasing function of $\alpha$ when $tan(\theta),k$ are fixed, it is also decreasing with respect to $k$ when $\alpha,tan(\theta)$ are fixed.

Let $\theta_{2}(k,\alpha)$ be the estimated vanishing angle function which replacing $\mathrm{inf}_{v}det(I-tan(\theta) h_{ij}^{v})$ by
\begin{equation}
lim_{k\rightarrow \infty} F(\alpha,tan(\theta),k-1)=\left(1-\alpha tan(\theta)\right)e^{\alpha tan(\theta)}
\end{equation}
in \eqref{VN}, where the condition $\alpha^2 tan^2(\theta_{2})\leq1$
should be satisfied, and $(1-\alpha tan(\theta))e^{\alpha tan(\theta)}$ is also an decreasing function of $\alpha$ when $tan(\theta)$ are fixed.

The three angles have the following relation:
\begin{equation}
\theta_{0}\leq \theta_{1}(k,\alpha)\leq \theta_{2}(k,\alpha),
\end{equation}
and Lawlor use the angle function $\theta_{1}$ for $dim\ C=\{3,\ldots,11\}$, the angle function $\theta_{2}$ for $dim\ C=12$ to gain 'The Table'(see section 1.4 in \cite{lawlor1991sufficient}).

A simplified version of Curvature Criterion states that if two times of estimated vanishing angles are still less than the associated normal radius pointwise, then these cones are area-minimizing.

\section{Cones of dimension bigger than $7$}
Now, for every $i\in \{1,\ldots,m\}$, consider the minimal embedding(\cite{Cui2021})
\begin{equation}
\begin{aligned}
G(l_{i},k_{i};\mathbb{F})&\rightarrow S^{n_{i}}(r_{i})\\
 A &\mapsto A-\frac{l_{i}}{k_{i}}I,
\end{aligned}
\end{equation}
where $r_{i}=\sqrt{\frac{l_{i}(k_{i}-l_{i})}{2k_{i}}}$, $dim\ G(l_{i},k_{i};\mathbb{F})=dl_{i}(k_{i}-l_{i})$, $n_{i}=k_{i}-2+\frac{k_{i}(k_{i}-1)}{2}d$, and we assume $l_{i}\leq k_{i}-l_{i}$ for every $i$, it contain the following three reduced cases: $\mathbb{R}P^{1}\equiv S^{1}(\frac{1}{2})$, $\mathbb{C}P^{1}\equiv S^{2}(\frac{1}{2})$ and $\mathbb{H}P^{1}\equiv S^{4}(\frac{1}{2})$(\cite{Cui2021}).

First, we consider that a family of embedded Grassmannians are of the same base field. Let $f_{i}$ be the normalized minimal embedding $f_{i}:\frac{1}{r_{i}}G(l_{i},k_{i};\mathbb{F})\rightarrow S^{n_{i}}(1)$, then $f:M\equiv\prod_{i=1}^{m}\frac{\lambda_{i}}{r_{i}}G(l_{i},k_{i};\mathbb{F})\rightarrow S^{\sum_{i=1}^{m}n_{i}+m-1}(1)$ is minimal if and only if $a_{i}=\frac{\lambda_{i}}{r_{i}}=\sqrt{\frac{2dk_{i}}{dim \ M}}$, where $dim \ M=\sum_{i=1}^{m}dl_{i}(k_{i}-l_{i})$.

Hence,
\begin{prop}
The cone over $\prod_{i=1}^{m}\sqrt{k_{i}}G(l_{i},k_{i};\mathbb{F})$ is minimal.
\end{prop}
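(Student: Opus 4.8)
The statement is essentially a corollary of the balanced-weight proposition just proved, together with the classical fact that the cone $C(M)$ over a submanifold $M$ of a sphere centred at the origin is a minimal cone if and only if $M$ is a minimal submanifold of that sphere. The plan is therefore to identify $\prod_{i=1}^{m}\sqrt{k_{i}}\,G(l_{i},k_{i};\mathbb{F})$ as a homothetic copy of the minimal link $f(M)\subset S^{\sum_{i} n_i+m-1}(1)$ found above, and then to pass to the cone.

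First I would unwind the scale factors. Under $A\mapsto A-\frac{l_{i}}{k_{i}}I$ the Grassmannian $G(l_{i},k_{i};\mathbb{F})$ lies in the sphere of radius $r_{i}=\sqrt{l_{i}(k_{i}-l_{i})/(2k_{i})}$, so the product immersion sends $(x_{1},\dots,x_{m})$ to $\bigl(a_{1}(P_{L_{1}}-\frac{l_{1}}{k_{1}}I),\dots,a_{m}(P_{L_{m}}-\frac{l_{m}}{k_{m}}I)\bigr)$ with $a_{i}=\lambda_{i}/r_{i}$. Minimality in $S(1)$ forces, as computed just above, $a_{i}=\sqrt{2dk_{i}/\dim M}$, and the key observation is the factorisation
$$a_{i}=\sqrt{\tfrac{2d}{\dim M}}\;\sqrt{k_{i}},$$
in which the prefactor $\sqrt{2d/\dim M}$ is independent of $i$. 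Hence the minimal image $f(M)$ is obtained from $\prod_{i=1}^{m}\sqrt{k_{i}}\,G(l_{i},k_{i};\mathbb{F})$ by the single ambient homothety of ratio $\sqrt{2d/\dim M}$.

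Next I would pin down the sphere in which the link sits. Using $g(A,B)=\frac{1}{2}\mathrm{Re}\,\mathrm{tr}_{\mathbb{F}}(AB)$ together with $P_{L_i}^2=P_{L_i}$ and $\mathrm{tr}_{\mathbb{F}}P_{L_i}=l_i$, a direct computation gives $\|\sqrt{k_{i}}(P_{L_{i}}-\frac{l_{i}}{k_{i}}I)\|^{2}=l_{i}(k_{i}-l_{i})/2$, so that $\prod_{i=1}^{m}\sqrt{k_{i}}\,G(l_{i},k_{i};\mathbb{F})$ lies in the sphere of radius $R=\sqrt{\sum_{i}l_{i}(k_{i}-l_{i})/2}=\sqrt{\dim M/(2d)}$, which is exactly the reciprocal of the homothety ratio above. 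Thus rescaling this product by $1/R$ recovers the minimal submanifold $f(M)\subset S(1)$.

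Finally I would conclude. A homothety of the ambient Euclidean space carries minimal submanifolds of a sphere to minimal submanifolds of the rescaled sphere, so the minimality of $f(M)$ in $S(1)$ (guaranteed by the balanced weights of the preceding proposition) transfers to minimality of $\prod_{i}\sqrt{k_{i}}\,G(l_{i},k_{i};\mathbb{F})$ in $S(R)$; the cone-over-minimal-link criterion then yields minimality of the cone. The proof carries no real obstacle beyond this scale bookkeeping: the only points needing care are that the common factor $\sqrt{2d/\dim M}$ is genuinely $i$-independent and that the circumscribing radius equals $\sqrt{\dim M/(2d)}$, after which the conclusion is immediate.
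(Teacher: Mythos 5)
Your proposal is correct and follows essentially the same route as the paper, which deduces the proposition directly from the computation $a_{i}=\lambda_{i}/r_{i}=\sqrt{2dk_{i}/\dim M}$ by observing that the common factor $\sqrt{2d/\dim M}$ is an overall homothety; you have merely spelled out the scale bookkeeping (the circumscribing radius $\sqrt{\dim M/(2d)}$ and the cone-over-minimal-link criterion) that the paper leaves implicit in its ``Hence.''
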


We will use Gary R. Lawlor's Curvature Criterion to verify that whether these cones are area-minimizing.

\begin{prop}
The upper bound of the second fundamental forms of the cone over $\prod_{i=1}^{m}\sqrt{k_{i}}G(l_{i},k_{i};\mathbb{F})$ at the points belong to the unit sphere is given by $sup_{v}|H^{v}|^2=dim \ M$.
\end{prop}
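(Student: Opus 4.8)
The plan is to read off the answer from Theorem \ref{forms formula} and then supply the one missing pointwise input, namely a curvature bound for each Grassmannian factor. First I would note that the cone $C$ is ruled and its generating-line direction is totally geodesic, so at a point $p\in M$ of the unit sphere the shape operators of $C$ in the directions tangent to $M$ coincide with those of the product immersion $f\colon M\hookrightarrow S^{\sum n_i+m-1}(1)$; hence the quantity to be computed is exactly $\sup_v|H^v|^2$ for $f$. By Theorem \ref{forms formula}, $\sup_v|H^v|^2=\dim M\cdot\max\{1,\alpha_1^2/\dim M_1,\ldots,\alpha_m^2/\dim M_m\}$, where $\alpha_i^2=\sup_\xi|H_i^\xi|^2$ ranges over unit normals of the normalized factor $f_i\colon\frac1{r_i}G(l_i,k_i;\mathbb{F})\to S^{n_i}(1)$ and $\dim M_i=dl_i(k_i-l_i)$. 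Therefore it suffices to prove $\alpha_i^2\le\dim M_i$ for every $i$: then the maximum equals $1$, and the normal $v$ obtained by taking all $\xi_i=0$ (so $v_i=-b_ix_i$ with $\sum\lambda_ib_i=0$) realizes the value $\dim M$.

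To estimate $\alpha_i^2$ I would work in the Hermitian projector model of \cite{Cui2021}. Fixing $P=P_L$ and decomposing matrices into blocks relative to $L\oplus L^\perp$, the tangent vectors to $G(l_i,k_i;\mathbb{F})$ are the off-diagonal Hermitian blocks determined by $B$, while a normal has the form $\xi=\mathrm{diag}(S,T)$ with $S\in H(l_i;\mathbb{F})$, $T\in H(k_i-l_i;\mathbb{F})$; tracelessness of the ambient together with orthogonality to the position vector $P-\tfrac{l_i}{k_i}I$ forces $\mathrm{Re}\,\mathrm{tr}\,S=\mathrm{Re}\,\mathrm{tr}\,T=0$. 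Differentiating the geodesic $P(t)=e^{tZ}Pe^{-tZ}$ (with $[Z,P]=X$) twice and taking the normal part of the acceleration gives the shape operator $A_\xi(B)=BT-SB$, whose eigenvalues, after diagonalizing $S=\mathrm{diag}(s_a)$ and $T=\mathrm{diag}(t_b)$, are $t_b-s_a$ with multiplicity $d$. Tracking the rescaling by $1/r_i$, under which $|H^\xi|^2$ acquires a factor $r_i^2$, yields $\alpha_i^2=r_i^2\,d\,\max_\xi\sum_{a,b}(t_b-s_a)^2$ subject to the unit-normal condition $\tfrac12\bigl(\|s\|^2+\|t\|^2\bigr)=1$.

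The remaining step is the constrained maximization, and here the trace-free reduction makes everything transparent: using $\sum_a s_a=\sum_b t_b=0$ the objective collapses to $\sum_{a,b}(t_b-s_a)^2=(k_i-l_i)\|s\|^2+l_i\|t\|^2$, a linear function of $(\|s\|^2,\|t\|^2)$ on the segment $\|s\|^2+\|t\|^2=2$. Since $k_i-l_i\ge l_i$, the maximum sits at the extreme point carrying all the weight on $\|s\|^2$ when $l_i\ge2$, and on $\|t\|^2$ when $l_i=1$ (where tracelessness forces $S=0$). A short computation with $r_i^2=\tfrac{l_i(k_i-l_i)}{2k_i}$ then gives $\alpha_i^2/\dim M_i$ equal to $\tfrac{k_i-l_i}{k_i}$ or $\tfrac1{k_i}$, both strictly less than $1$. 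Hence $\max\{1,\alpha_i^2/\dim M_i\}=1$ and $\sup_v|H^v|^2=\dim M$, as claimed.

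The main obstacle is the faithful computation of $A_\xi$ and the careful bookkeeping of the three normalizations---the radius $r_i$, the inner product $\tfrac12\mathrm{Re}\,\mathrm{tr}$, and the real multiplicity $d$---because an error in any constant would corrupt the ratio $\alpha_i^2/\dim M_i$ on which the whole statement rests. In practice much of this second-fundamental-form data for a single $G(l_i,k_i;\mathbb{F})$ is already recorded in \cite{Cui2021}, so the proposition should reduce to quoting $\alpha_i^2$ and verifying the inequality $\alpha_i^2<\dim M_i$; the only genuinely new point is the elementary optimization above, which the trace-free reduction renders immediate.
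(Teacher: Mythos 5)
Your proposal is correct and follows the same skeleton as the paper's proof: reduce to $\sup_v|H^v|^2=\dim M\cdot\max\{1,\alpha_i^2/\dim M_i\}$ via Theorem~\ref{forms formula} and then check $\alpha_i^2<\dim M_i$ for each factor. The difference is in how that last input is obtained. The paper simply quotes $\alpha_i^2=\frac{dl_i(k_i-l_i)^2}{k_i}$ from \cite{kerckhove1994isolated} and \cite{Cui2021} and observes $\frac{\alpha_i^2}{dl_i(k_i-l_i)}=\frac{k_i-l_i}{k_i}<1$; you derive $\alpha_i^2$ from scratch in the projector model, and your computation checks out (the shape operator in the normal direction $\xi=\mathrm{diag}(S,T)$ does have eigenvalues $t_b-s_a$ with multiplicity $d$, the constraint analysis is right, and the rescaling bookkeeping is consistent). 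What your derivation buys is a sharper and, for $l_i=1$, a corrected constant: putting all the normal weight on the $S$-block is impossible when $S$ is a $1\times1$ traceless matrix, so for projective-space factors one gets $\alpha_i^2=\frac{dl_i^2(k_i-l_i)}{k_i}$ rather than the quoted $\frac{dl_i(k_i-l_i)^2}{k_i}$. Your value is the one consistent with the paper's own Section~4 computation for the Veronese $\mathbb{R}P^2$ (where a unit normal gives $|H^\xi|^2=\frac{2}{3}$, not $\frac{4}{3}$) and with the classical fact that the Veronese surface in $S^4$ has total squared second fundamental form $\frac{4}{3}$ split evenly over a two-dimensional normal space. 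Since both values are strictly below $dl_i(k_i-l_i)$, the discrepancy does not affect the proposition, but it is worth recording because $\alpha^2$ feeds directly into the vanishing-angle estimates elsewhere in the paper. Two cosmetic points: write the shape operator as $B\mapsto TB-BS$ so the block sizes match, and note that realizing the supremum $\dim M$ with all $\xi_i=0$ requires $m\ge2$ so that $\sum_i\lambda_ib_i=0$ and $\sum_ib_i^2=1$ can hold simultaneously, which is automatic in the product setting.
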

\begin{proof}
The product of embedded Grassmannians can be seen as an orbit of product of associated unitary group(\cite{kerckhove1994isolated}), then we can compute the second fundamental forms at an fixed point.

Since $sup_{v}|H^{v}|^2=dim \ M \cdot max\{1,\frac{\alpha_{1}^2}{dl_{1}(k_{1}-l_{1})},\ldots,\frac{\alpha_{m}^2}{dl_{m}(k_{m}-l_{m})}\}$,  and $\alpha_{i}^2=\frac{dl_{i}(k_{i}-l_{i})^2}{k_{i}}$(\cite{kerckhove1994isolated},\cite{Cui2021}), then $\frac{\alpha_{i}^2}{dl_{i}(k_{i}-l_{i})}=\frac{k_{i}-l_{i}}{k_{i}}<1$.
\end{proof}

The computation of normal radius follows \cite{kerckhove1994isolated},\cite{Cui2021} directly, in the $i$-factor of $P=\left(\ldots,a_{i}(1-\frac{l_{i}}{k_{i}}),\ldots,a_{i}(1-\frac{l_{i}}{k_{i}}),a_{i}(-\frac{l_{i}}{k_{i}}),
\ldots,a_{i}(-\frac{l_{i}}{k_{i}}),\ldots\right)$, we exchange one pair of $a_{i}(1-\frac{l_{i}}{k_{i}})$ and $a_{i}(-\frac{l_{i}}{k_{i}})$ in the same one factor to gain another point $\tilde{P}$, and
\begin{equation}
\langle P,\tilde{P}\rangle=1-\frac{dk_{i}}{dim \ M}.
\end{equation}

Hence,
\begin{prop}\label{normal radius}
The normal radius of the cone over $\prod_{i=1}^{m}\sqrt{k_{i}}G(l_{i},k_{i};\mathbb{F})$ is $arccos(1-\frac{dk_{1}}{dim \ M})$, where $k_{1}=min \{k_{1},\ldots,k_{m}\}$.
\end{prop}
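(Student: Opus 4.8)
The goal is to compute the normal radius of the cone over $\prod_{i=1}^{m}\sqrt{k_{i}}G(l_{i},k_{i};\mathbb{F})$, which by definition is the infimum, over pairs of points $P,\tilde P$ on the minimal submanifold $M$ in the unit sphere, of the geodesic distance $\arccos\langle P,\tilde P\rangle$ subject to a suitable normality constraint (the pair realizing the closest approach transverse to the tangent cone). First I would set up an explicit point $P\in M$ in the product of Hermitian-matrix Euclidean spaces: in the $i$-th factor, the embedded projector $A\in G(l_{i},k_{i};\mathbb{F})$ translated by $-\frac{l_{i}}{k_{i}}I$ and rescaled by $a_{i}=\sqrt{2dk_{i}/\dim M}$, so that the $i$-th block has $l_{i}$ diagonal entries equal to $a_{i}(1-\frac{l_{i}}{k_{i}})$ and $k_{i}-l_{i}$ entries equal to $-a_{i}\frac{l_{i}}{k_{i}}$. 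This is the form of $P$ displayed just before the statement.

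Next, following \cite{kerckhove1994isolated} and \cite{Cui2021}, I would identify the competitor point $\tilde P$ that minimizes the inner product. The natural candidate is obtained by moving within a single factor to the ``nearest'' distinct projector: swap exactly one pair of eigenvalue entries $a_{i}(1-\frac{l_{i}}{k_{i}})$ and $-a_{i}\frac{l_{i}}{k_{i}}$ inside one block. A direct computation of the Euclidean inner product $\langle P,\tilde P\rangle$ in the metric $g(A,B)=\frac{1}{2}\operatorname{Re}\operatorname{tr}_{\mathbb{F}}(AB)$ then yields the value $1-\frac{dk_{i}}{\dim M}$ recorded in the displayed equation preceding the statement; I would verify that this single-swap competitor indeed lies on $M$ and satisfies the transversality required for a normal-radius witness, so that $\arccos(1-\frac{dk_{i}}{\dim M})$ is an attained upper bound for the normal radius.

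The crux is the optimization: I must show that swapping a single pair in the factor with the smallest $k_{i}$ gives the global minimizer of $\langle P,\tilde P\rangle$, hence the normal radius. Since $\langle P,\tilde P\rangle = 1-\frac{dk_{i}}{\dim M}$ is a strictly decreasing function of $k_{i}$, the inner product is smallest—and thus the angle largest—when $k_{i}$ is smallest; but the normal radius is the \emph{infimum} of such angles over admissible nearby points, which corresponds to choosing the configuration closest to $P$, i.e.\ the largest $\langle P,\tilde P\rangle$. I would argue that among all admissible swaps across all factors, multi-factor or multi-pair perturbations only decrease $\langle P,\tilde P\rangle$ further, so the \emph{closest} admissible distinct point is the single-swap one in the factor minimizing $dk_{i}$, which occurs at $k_{1}=\min\{k_{1},\ldots,k_{m}\}$. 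Establishing that no other nearby point on $M$ beats this single-swap competitor—ruling out more exotic geodesic approaches in the product geometry—is the step I expect to be the main obstacle, and I would handle it by reducing to the per-factor analysis already carried out in \cite{kerckhove1994isolated} and \cite{Cui2021} and invoking that the product structure makes inner products additive across blocks.
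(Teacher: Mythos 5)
Your proposal follows essentially the same route as the paper: the paper likewise takes the displayed base point $P$, forms $\tilde P$ by exchanging one pair of eigenvalues $a_i(1-\frac{l_i}{k_i})$ and $a_i(-\frac{l_i}{k_i})$ within a single factor, computes $\langle P,\tilde P\rangle = 1-\frac{dk_i}{\dim M}$, and defers the optimality of this single-swap competitor to the per-factor analysis of \cite{kerckhove1994isolated} and \cite{Cui2021}, exactly as you do. One small slip to fix: your clause ``the inner product is smallest---and thus the angle largest---when $k_i$ is smallest'' contradicts the (correct) monotonicity you just stated, since $1-\frac{dk_i}{\dim M}$ is largest when $k_i$ is smallest; your final conclusion (closest competitor in the factor minimizing $dk_i$, giving the smallest angle $\arccos(1-\frac{dk_1}{\dim M})$) is the correct one and matches the paper.
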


The left work is to compare the normal radius and the estimated vanishing angles, we have
\begin{theorem}\label{high dim}
The cone $C$ over $M=\prod_{i=1}^{m}\sqrt{k_{i}}G(l_{i},k_{i};\mathbb{F})$ is area-minimizing, if $dim C>7$.
\end{theorem}
\begin{proof}
We first consider the cases: $7\leq dim M\leq 11$.

~(1)$\mathbb{F}=\mathbb{H}$, M can only be $\mathbb{H}P^1 \times \mathbb{H}P^1$. $(dim C,\alpha^2)=(9,8)$, then the estimated vanishing angle is $12.99^{\circ}$ by Lawlor's table, and the normal radius is $arccos(1-\frac{dk_{1}}{dim M})=\frac{\pi}{2}$, it is area-minimizing;

~(2)$\mathbb{F}=\mathbb{C}$, all of the normal radius between different cases have the minimum value: $arccos(1-\frac{4}{10})=arccos(\frac{3}{5})=53^{\circ}$, and the maximal value of vanishing angle is no more than $12.99^{\circ}$ by Lawlor's table, so these cones are also area-minimizing;

~(3)$\mathbb{F}=\mathbb{R}$, all of the normal radius between different cases have the minimum value: $arccos(1-\frac{2}{11})=arccos(\frac{9}{11})=35^{\circ}$, and the maximal value of vanishing angle is no more than $15.84^{\circ}$ by Lawlor's table, so these cones are also area-minimizing.

For $dim M\geq 12$, let $k=dim C=dim M+1\geq 13$, we use the following formula given by Gary R. Lawlor:
\begin{equation}
tan(\theta_{2}(k,\alpha))< \frac{12}{k}tan\left(\theta_{2}\left(12,\frac{12}{k}\alpha\right)\right).
\end{equation}

Now, $\alpha=\sqrt{k-1}$, then $\frac{12}{k}\alpha\leq \sqrt{13}$, hence
\begin{equation}
tan\left(\theta_{2}(k,\alpha)\right)< \frac{12}{k}tan\left(\theta_{2}(12,\sqrt{13})\right)<\frac{2}{k}.
\end{equation}

The normal radius of cone over $M=\prod_{i=1}^{m}\sqrt{k_{i}}G(l_{i},k_{i};\mathbb{F})$ is $arccos(1-\frac{dk_{1}}{k-1})$(we arrange the factors such that $k_{1}=min \{k_{1},\ldots,k_{m}\}$), we compare $2arctan\frac{2}{k}$ and $arccos(1-\frac{dk_{1}}{k-1})$ as follows:

Note $2arctan\frac{2}{k}<\frac{4}{k}$, $1-cos\frac{4}{k}<\frac{8}{k^2}$ and $dk_{1}\geq 2$, then the proof is followed by the above relations.
\end{proof}

There are also analogies for the product embedding of Grassmannian manifolds over different base fields, consider the normalized minimal embeddings $f_{i}:\frac{1}{r_{i}}G(l_{i},k_{i};\mathbb{F}_{i})\rightarrow S^{n_{i}}(1)$, set $d_{i}=dim_{\mathbb{R}}\mathbb{F}_{i}$, then $r_{i}=\sqrt{\frac{l_{i}(k_{i}-l_{i})}{2k_{i}}}$, $dim\ G(l_{i},k_{i};\mathbb{F}_{i})=d_{i}l_{i}(k_{i}-l_{i})$, $n_{i}=k_{i}-2+\frac{k_{i}(k_{i}-1)}{2}d_{i}$, and we assume $l_{i}\leq k_{i}-l_{i}$ for every $i$.

Let $M$ be $\prod_{i=1}^{m}\frac{\lambda_{i}}{r_{i}}G(l_{i},k_{i};\mathbb{F}_{i})$, then $f:M\rightarrow S^{\sum_{i=1}^{m}n_{i}+m-1}(1)$ is minimal if and only if $a_{i}=\frac{\lambda_{i}}{r_{i}}=\sqrt{\frac{2d_{i}k_{i}}{dim \ M}}$, where $dim \ M=\sum_{i=1}^{m}d_{i}l_{i}(k_{i}-l_{i})$.

Hence,
\begin{prop}
The cone over $\prod_{i=1}^{m}\sqrt{d_{i}k_{i}}G(l_{i},k_{i};\mathbb{F}_{i})$ is minimal.
\end{prop}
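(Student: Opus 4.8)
The plan is to reduce the minimality of the cone to the minimality of its link $M=\prod_{i=1}^{m}\sqrt{d_{i}k_{i}}G(l_{i},k_{i};\mathbb{F}_{i})$ inside the unit sphere, and then to invoke the minimal-product criterion of Section 2.1. I would begin from the standard fact that the cone $C(M)$ over a submanifold $M\subset S^{N}(1)$ is minimal in the ambient Euclidean space precisely when $M$ is minimal in $S^{N}(1)$, since along any radial ray the mean curvature of the cone is governed entirely by that of the link. Thus it suffices to verify that the product immersion $f=(\lambda_{1}f_{1},\dots,\lambda_{m}f_{m})$ is minimal, and the argument then runs completely parallel to the single-field case treated earlier in this section (Proposition 3.1).

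First I would record the data of each factor: the normalized embedding $f_{i}:\frac{1}{r_{i}}G(l_{i},k_{i};\mathbb{F}_{i})\to S^{n_{i}}(1)$ is minimal by \cite{Cui2021}, with $r_{i}=\sqrt{\frac{l_{i}(k_{i}-l_{i})}{2k_{i}}}$ and $\dim M_{i}=d_{i}l_{i}(k_{i}-l_{i})$. By the minimal-product criterion of Section 2.1, the product $f$ is minimal exactly when $\lambda_{i}=\sqrt{\dim M_{i}/\dim M}$, where $\dim M=\sum_{i}\dim M_{i}$. The key computation is to check that the scaling $a_{i}=\lambda_{i}/r_{i}=\sqrt{2d_{i}k_{i}/\dim M}$ prescribed in the statement is equivalent to this condition; indeed
\[
\lambda_{i}=a_{i}r_{i}=\sqrt{\frac{2d_{i}k_{i}}{\dim M}}\cdot\sqrt{\frac{l_{i}(k_{i}-l_{i})}{2k_{i}}}=\sqrt{\frac{d_{i}l_{i}(k_{i}-l_{i})}{\dim M}}=\sqrt{\frac{\dim M_{i}}{\dim M}},
\]
and the normalization $\sum_{i}\lambda_{i}^{2}=\sum_{i}\dim M_{i}/\dim M=1$ holds automatically, so $f$ lands in the unit sphere. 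This identifies the claimed weights with the minimal ones and closes the reduction.

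I do not expect a genuine obstacle, since the minimal-product criterion of Section 2.1 is proved for an arbitrary family of minimal immersions into spheres and is insensitive to the base fields; the only new feature relative to Proposition 3.1 is that the real dimension $d_{i}=\dim_{\mathbb{R}}\mathbb{F}_{i}$ may vary with $i$, and this enters uniformly only through $\dim M_{i}=d_{i}l_{i}(k_{i}-l_{i})$. The single point deserving a line of justification is the minimality of each individual factor $f_{i}$ for every admissible base field, including the reduced cases $\mathbb{R}P^{1}$, $\mathbb{C}P^{1}$, $\mathbb{H}P^{1}$; this is exactly the content cited from \cite{Cui2021} and requires no new input. Hence the cone over $\prod_{i=1}^{m}\sqrt{d_{i}k_{i}}G(l_{i},k_{i};\mathbb{F}_{i})$ is minimal.
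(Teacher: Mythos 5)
Your argument is correct and is essentially identical to the paper's, which likewise applies the minimal-product criterion of Section 2.1 to the normalized embeddings $f_{i}$ and observes that the minimality condition $\lambda_{i}=\sqrt{\dim M_{i}/\dim M}$ is equivalent to $a_{i}=\lambda_{i}/r_{i}=\sqrt{2d_{i}k_{i}/\dim M}$. The only cosmetic point left implicit in both versions is that $\sqrt{d_{i}k_{i}}$ differs from $a_{i}$ by the common factor $\sqrt{2/\dim M}$, which is harmless since a global dilation does not affect minimality of the cone.
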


The upper bound of second fundamental forms of this cone is also $dim M$, its normal radius is $arccos (1-\frac{min\{d_{i}k_{i}\}}{dim \ M})$.  After an similar discussion to theorem \ref{high dim}, we have
\begin{theorem}
The cone $C$ over $M=\prod_{i=1}^{m}\sqrt{d_{i}k_{i}}G(l_{i},k_{i};\mathbb{F}_{i})$ is area-minimizing if $dim C>7$, where $d_{i}=dim_{\mathbb{R}}\mathbb{F}_{i}$.
\end{theorem}

\section{Cones of dimension $7$}
In this section, we consider $M$ as the product of embedded Grassmannian manifolds of $dim M=6$, just like the cases of products of spheres(see chapter $4$ and $5$ in \cite{lawlor1991sufficient}), the cone over $M$ may still have vanishing angles, we should find the concrete expressions of $inf_{v}det(I-tH^{v}_{ij})$ in \eqref{VN}.

Let $A$ be an arbitrary $m \times m$ symmetric matrix whose trace is zero, $\alpha:=||A||=\sqrt{\sum a_{ij}^2}$, $A$ just has $m$ real eigenvalues $\{a_{1},\ldots,a_{m}\}$, then $det(I-tA)$ attains its minimum for any symmetric matrix $A$ when there are only two different values represented among all of the $a_{i}$((see Appendix of \cite{lawlor1991sufficient}), and the conditions $\sum a_{i}=0$ and $\sum a_{i}^2=\alpha^2$ completely determine the solution once we know how many should be positive. Denote the multiplicity of the positive eigenvalue by $r$, the solution is $\alpha \sqrt{\frac{m-r}{mr}}$ of multiplicities $r$ and $-\alpha \sqrt{\frac{r}{m(m-r)}}$ of multiplicities $m-r$.

Let $L(\alpha,t,m,r)$ be the result function:
\begin{equation}
L(\alpha,t,m,r):=\left(1-t\alpha \sqrt{\frac{m-r}{mr}}\right)^{r}\left(1+t\alpha \sqrt{\frac{r}{m(m-r)}}\right)^{m-r},
\end{equation}
where $r\in [1,m-1]$ is an integer, $L$ is decreasing in $r$, the minimum of $L$ with respect to $r$ is denoted by $F(\alpha,t,m):=L(\alpha,t,m,1)$.

The smaller $r$ is, the bigger the angle radius of normal wedge will be. Lawlor use the function $F(\alpha,t,m)$ to build "The table" in \cite{lawlor1991sufficient}. There's a critical situation that when $\alpha=\sqrt{6}, m=6$, from "The Table", the solution of \eqref{VN} associated to $inf_{v}det(I-tH^{v}_{ij})=F(\sqrt{6},t,6)$ exists in some finite interval $[0,\theta]$, the vanishing angle does not exist. Though, for concrete cones under critical situation, the multiplicity of the positive eigenvalue may fail to attain the least number $1$, the cone over $M$ may still have vanishing angles.

The multiplicity of the positive eigenvalue played key roles in the complete classifcation of cones over products of spheres, for $\alpha=\sqrt{6}, m=6$, the function of multiplicity $1$ is
\begin{equation}
F(t):=F(\sqrt{6},t,6)=L(\sqrt{6},t,6,2)=(1-t\sqrt{5})\left(1+\frac{t}{\sqrt{5}}\right)^{5},
\end{equation}
it associate to the products of spheres which one of the spheres is a circle, their cones are stable, by the studying of cones for which the Curvature Criterion is Necessary and Sufficient, Lawlor conclude that these cones are stable, not area-minimizing.

The function of multiplicity $2$ is
\begin{equation}
E(t):=L(\sqrt{6},t,6,2)=(1-t\sqrt{2})^{2}\left(1+\frac{t}{\sqrt{2}}\right)^{4},
\end{equation}
associated to the cones over $S^2\times S^4$ and $S^2\times S^2 \times S^2$, their cones have vanishing angles, hence be area-minimizing.

And the function of multiplicity $3$ is
\begin{equation}
G(t):=L(\sqrt{6},t,6,3)=(1-t)^{3}(1+t)^{3},
\end{equation}
associated to the cone over $S^3\times S^3$, the cone is area-minimizing too.

Now, we consider the cones $C=C(M)$ over the product of embedded Grassmannian manifolds of dimension $7$. Under the standard embedding, $\mathbb{R}P^{1}\equiv S^{1}(\frac{1}{2})$, $\mathbb{C}P^{1}\equiv S^{2}(\frac{1}{2})$ and $\mathbb{H}P^{1}\equiv S^{4}(\frac{1}{2})$(\cite{Cui2021}), we call their associated factors in the product---the $reduced \ factors$ since there doesn't exist the terms of second fundamental forms in \eqref{base equation}, or additionally, the factors could be any dimensional spheres less than $6$,  i.e. one of the factors is $S^1,S^2,S^3,S^4,S^5$.

So, there exists some cones over the products of embedded Grassmannian manifolds which part of its factors are $reduced \ factors$, and there are five cones which all its factors are not $reduced \ factors$, we divide them into two types, and we will show that cones over types:

~(I)$\mathbb{R}P^2 \times \mathbb{R}P^2  \times \mathbb{R}P^2$, $\mathbb{R}P^2 \times G(2,4;\mathbb{R})$, $\mathbb{R}P^2 \times \mathbb{R}P^4$, $\mathbb{R}P^3 \times \mathbb{R}P^3$, $\mathbb{R}P^2 \times \mathbb{C}P^2$, and

~(II)$S^2 \times S^2 \times \mathbb{R}P^2$, $S^2 \times \mathbb{R}P^2 \times \mathbb{R}P^2$, $S^2 \times \mathbb{C}P^2$, $S^2 \times \mathbb{R}P^4$, $S^2 \times G(2,4;\mathbb{R})$, $S^4 \times \mathbb{R}P^2$, $S^3 \times \mathbb{R}P^3$ are area-minimizing.

We will prove that these cones are all area-minimizing by computing their minimum polynomials which none of them are multiplicity $1$ polynomial $F(t)$,

(1)~~$\mathbb{R}P^2 \times \mathbb{R}P^2  \times \mathbb{R}P^2$($S^2 \times S^2 \times \mathbb{R}P^2$, $S^2 \times \mathbb{R}P^2 \times \mathbb{R}P^2$):

First, we give illustrations for the area-minimization of the cone over the first one, then another one is an natural generalization.

The first submanifold is the products of three Veronese maps, for this case, we will prove that on $(0,\frac{2\sqrt{2}}{7})$, for any chosen unit normal vector $v$,
\begin{equation}
inf_{v}det(I-tH^{v}_{ij})=(1-t\sqrt{2})^2\left(1+\frac{t}{\sqrt{2}}\right)^4=E(t).
\end{equation}

Note $\frac{2\sqrt{2}}{7}=0.404$, and the vanishing angle associated to $E(t)$ is $tan19.9^{\circ}=0.362$, by considering its normal radius, we conclude that this cone is area-minimizing.

The illustrations are as follows:

First, $\mathbb{R}P^2$ is embedded in $H(3,\mathbb{R})$(or its hyperplane of trace zero), the normal vector $\xi_{1}$ is given by $diag\{0,a,-a\}$(to see this, first assume $\xi_{1}$ is an any given normal vector, if $g_{\ast}\xi_{1}:=g\xi_{1}g^{T}$ is diagonal for an isotropy isometry action $g$, we could change the standard orthonormal tangent basis $E_{\alpha}:=E_{\alpha 1}+E_{1 \alpha}(2\leq \alpha \leq 3)$ given in \cite{Cui2021} to $(g_{-1})_{\ast}E_{\alpha}$, note the second fundamental forms is also equivariant, then $\langle h((g_{-1})_{\ast}E_{\alpha},(g_{-1})_{\ast}E_{\beta}),\xi_{1}\rangle=\langle h(E_{\alpha},E_{\beta}),g_{\ast}\xi_{1}\rangle$, the computation is simplified), $|\xi_{1}|^2=a^2$, note the image lies in a sphere of radius $\frac{1}{\sqrt{3}}$, then the value of $H_{1}^{\xi_{1}}$ at the sphere of radius $1$ is given by $\pm\frac{a}{\sqrt{3}}=\pm\frac{|\xi_{1}|}{\sqrt{3}}$.

Follow \eqref{base equation},
\begin{equation}\label{base equation two}
I-tH^{v}=diag\{ (1-\frac{b_{1}t}{\lambda_{1}})I_{k_{1}}-\frac{t}{\lambda_{1}}H_{1}^{\xi_{1}},\ldots,
(1-\frac{b_{m}t}{\lambda_{m}})I_{k_{m}}-\frac{t}{\lambda_{m}}H_{m}^{\xi_{m}}\},
\end{equation}
hence
\begin{equation}
det(I-tH^{v}_{ij})=\prod_{i=1}^{3}((1-\sqrt{3}b_{i}t)^2-|\xi_{i}|^2t^2),
\end{equation}
where $\sum_{i=1}^{3}(b_{i}^2+|\xi_{i}|^2)=1$.

In the next, $t$ is restricted on $(0,\frac{1}{\sqrt{5}})$, this choice of $t$ is enough for ensuring that the normal wedge of angle radius $arctant$ won't meets the focal points of the cone. Denote $|\xi_{i}|^2=c_{i}\geq 0$, we claim that if minimum value of $det(I-tH^{v}_{ij})$ is attained for fixed $b_{i},c_{i}(1\leq i \leq 3)$, then no two of $c_{1},c_{2},c_{3}$ are non-zeros. If so, assume $c_{2}>0,c_{3}>0$, then for fixed $b_{1},b_{2},b_{3},c_{1}$, the sum of $c_{2}$ and $c_{3}$ is constant. We adjust $c_{2},c_{3}$ as follows: if $(1-\sqrt{3}b_{2}t)^2\geq(1-\sqrt{3}b_{3}t)^2$, i.e. $b_{2}\leq b_{3}$, then we let $c_{2}$ be its possible maximal value and $c_{3}$ be zero, the result will be more small, this is an contradiction.

So, we can assume $c_{2}=c_{3}=0$, then
\begin{equation}
det(I-tH^{v}_{ij})=\left((1-\sqrt{3}b_{1}t)^2-c_{1}t^2\right)(1-\sqrt{3}b_{2}t)^2(1-\sqrt{3}b_{3}t)^2,
\end{equation}
subject to
\begin{equation}
\begin{cases}
b_{1}+b_{2}+b_{3}=0, \\
b_{1}^2+b_{2}^2+b_{3}^2+c_{1}=1,c_{1}\geq 0.
\end{cases}
\end{equation}

We express $det(I-tH^{v}_{ij})$ as functions of $t,b_{1},c_{1}$, for convenience, we replace $b_{1}$ by $a$, $c_{1}$ by $b^2$, the above equations should have two real roots for $b_{2},b_{3}$, then we can let the domain be
\begin{equation}
D\equiv\{(a,b)\in \mathbb{R}^2|3a^2+2b^2\leq 2 \},
\end{equation}
and
\begin{equation}
g(a,b):=det(I-tH^{v}_{ij})=((1-\sqrt{3}at)^2-b^2t^2)(1+\sqrt{3}at+(3a^2+\frac{3}{2}b^2-\frac{3}{2})t^2)^2
\end{equation}

Hence(we get almost all the following computations by using $Mathematica$),
\begin{equation}
\frac{\partial g}{\partial a}=-t^{3}(1+\sqrt{3}at+(3a^2+\frac{3}{2}b^2-\frac{3}{2})t^2)
(-3\sqrt{3}+18\sqrt{3}a^{2}+5\sqrt{3}b^{2}+9at-54a^{3}t+3ab^{2}t),
\end{equation}
and
\begin{equation}
\frac{\partial g}{\partial b}=-bt^{2}(1+\sqrt{3}at+(3a^2+\frac{3}{2}b^2-\frac{3}{2})t^2)
(-4+14\sqrt{3}at-3t^2-12a^{2}t^{2}+9b^{2}t^{2}).
\end{equation}

Consider $1+\sqrt{3}at+(3a^2+\frac{3}{2}b^2-\frac{3}{2})t^2$ as a quadratic function of $a$ and compute the discriminant, we have $1+\sqrt{3}at+(3a^2+\frac{3}{2}b^2-\frac{3}{2})t^2>0$ on $0<t<\frac{1}{\sqrt{5}}$.

Solve the equations: $\frac{\partial g}{\partial a}=0$ and $\frac{\partial g}{\partial b}=0$, we conclude that in the interior of $D$, i.e. $3a^2+2b^2<2$,
$g(a,b)$ attains minimum value at $a = -\frac{1}{6}$, $b = 0$(another critical point is $a = -\frac{1}{6}$, $b = 0$ which is bigger than this one). Note that the norm of the second fundamental form attains maximum value $\sqrt{6}$ when $(a, b) = (-\frac{1}{6}, 0)$ or $(\sqrt{\frac{2}{3}}, 0)$, and they own the same expression of $det(I-tH^{v}_{ij})$.

On boundary of $D$,
\begin{equation}\label{boundary equation}
f(a):=g\left(a,\sqrt{1-\frac{3}{2}a^2}\right)=-\frac{1}{288}(2\sqrt{3}+3at)^4(-2+4\sqrt{3}at+2t^2-9a^2t^2),
\end{equation}
subject to $a^2\leq \frac{2}{3}$, and
\begin{equation}
f'(a)=\frac{1}{48}t^2(2\sqrt{3}+3at)^3(27ta^2-4\sqrt{3}a-4t).
\end{equation}

By analyzing the quadratic function $27ta^2-4\sqrt{3}a-4t$ in the interval $a^2\leq \frac{2}{3}$, we find that if and only if $0<t<\frac{2\sqrt{2}}{7}$, the sign of $f'(a)$ is $+,-$(if $t>\frac{2\sqrt{2}}{7}$, the sign is $+,-,+$). Additionally, we compute that: $g(-\sqrt{\frac{2}{3}})>g(\sqrt{\frac{2}{3}})$, then $g(\sqrt{\frac{2}{3}})$ is the minimum value on the boundary.

The vanishing angle associated to $E(t)$ is $19.9^{\circ}$, and $tan(19.9^{\circ})< \frac{2\sqrt{2}}{7}$, since the normal radius is $arccos(\frac{1}{2})=60^{\circ}$ by proposition \ref{normal radius}, hence

\begin{theorem}
The cone over products of three Veronese embeddings $\mathbb{R}P^2 \times \mathbb{R}P^2  \times \mathbb{R}P^2$ is area-minimizing.
\end{theorem}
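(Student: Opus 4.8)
The plan is to apply Lawlor's Curvature Criterion, so everything reduces to controlling the integrand $\mathrm{inf}_{v}\det(I-tH^{v}_{ij})$ of the ODE \eqref{VN} and comparing the resulting vanishing angle with the normal radius from Proposition \ref{normal radius}. Since the worst-case universal estimate $F(\sqrt{6},t,6)$ for $\dim C=7$ admits no vanishing angle, the whole point is to show that for this particular cone the infimum is actually governed by the multiplicity-two polynomial $E(t)=(1-t\sqrt{2})^{2}\left(1+\frac{t}{\sqrt{2}}\right)^{4}$, which does possess a finite vanishing angle.

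First I would exploit the block structure of the second fundamental form of a minimal product recorded in \eqref{base equation}: for each Veronese factor the isotropy action lets me diagonalize the normal component $\xi_{i}$ to $\mathrm{diag}\{0,a,-a\}$, so that $H_{i}^{\xi_{i}}$ contributes eigenvalues $\pm|\xi_{i}|/\sqrt{3}$ and each factor yields the $2\times 2$ determinant $(1-\sqrt{3}b_{i}t)^{2}-|\xi_{i}|^{2}t^{2}$. This gives the factored expression $\det(I-tH^{v}_{ij})=\prod_{i=1}^{3}\left((1-\sqrt{3}b_{i}t)^{2}-|\xi_{i}|^{2}t^{2}\right)$ subject to $\sum_{i}b_{i}=0$ and $\sum_{i}(b_{i}^{2}+|\xi_{i}|^{2})=1$, with $t$ restricted to $(0,1/\sqrt{5})$ so that the normal wedge stays clear of the focal points.

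Next I would minimize this product over the constraint set. The key simplification is the claim that at a minimizer at most one of the $|\xi_{i}|^{2}$ can be nonzero: if two were positive, redistributing their fixed sum entirely onto the factor carrying the larger coefficient $(1-\sqrt{3}b_{i}t)^{2}$ strictly decreases the product, a contradiction. After setting $c_{2}=c_{3}=0$, I can eliminate $b_{2},b_{3}$ through the two constraints and reduce to a two-variable function $g(a,b)$ on the domain $D=\{3a^{2}+2b^{2}\le 2\}$, where $a=b_{1}$ and $b=|\xi_{1}|$. A routine (Mathematica-assisted) critical-point analysis then locates the interior minimum at $(a,b)=(-\tfrac{1}{6},0)$ and, after parametrizing the boundary by \eqref{boundary equation}, reduces the boundary minimization to the sign of the quadratic $27ta^{2}-4\sqrt{3}a-4t$ on $a^{2}\le\tfrac{2}{3}$.

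The main obstacle, and the place where the threshold $\frac{2\sqrt{2}}{7}$ enters, is precisely this boundary analysis: one must show that exactly on $0<t<\frac{2\sqrt{2}}{7}$ the derivative $f'(a)$ changes sign only once (pattern $+,-$), forcing the boundary minimum to occur at $a=\sqrt{2/3}$, where the value equals $E(t)$; beyond $\frac{2\sqrt{2}}{7}$ a third sign change appears and this identification fails. Granting $\mathrm{inf}_{v}\det(I-tH^{v}_{ij})=E(t)$ on $(0,\frac{2\sqrt{2}}{7})$, it then remains only to note that the vanishing angle associated with $E(t)$ is $19.9^{\circ}$ with $\tan 19.9^{\circ}<\frac{2\sqrt{2}}{7}$, so the solution of \eqref{VN} blows up within the validated range, while the normal radius is $\arccos(\tfrac{1}{2})=60^{\circ}$ by Proposition \ref{normal radius}; since $2\cdot 19.9^{\circ}<60^{\circ}$, the Curvature Criterion yields area-minimization.
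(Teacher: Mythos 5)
Your proposal follows essentially the same route as the paper's own proof: the block decomposition \eqref{base equation} with $\xi_{i}$ diagonalized to $\mathrm{diag}\{0,a,-a\}$, the concavity/redistribution argument forcing at most one $|\xi_{i}|^{2}\neq 0$ at a minimizer, the reduction to $g(a,b)$ on $D=\{3a^{2}+2b^{2}\le 2\}$ with the boundary sign analysis of $27ta^{2}-4\sqrt{3}a-4t$ yielding the threshold $\frac{2\sqrt{2}}{7}$, and the final comparison of the $19.9^{\circ}$ vanishing angle of $E(t)$ with the $60^{\circ}$ normal radius. It is correct and matches the paper's argument step for step.
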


Moreover, it is easy to see that the cases for $S^2 \times S^2  \times \mathbb{R}P^2$ and $S^2 \times \mathbb{R}P^2 \times \mathbb{R}P^2$ are attributed to the above discussions, by a further checking for normal radius, we also have

\begin{theorem}
The cones over $S^2 \times S^2  \times \mathbb{R}P^2$, $S^2 \times \mathbb{R}P^2 \times \mathbb{R}P^2$ are area-minimizing.
\end{theorem}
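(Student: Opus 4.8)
The plan is to deduce both products from the triple-Veronese case $\mathbb{R}P^2 \times \mathbb{R}P^2 \times \mathbb{R}P^2$ just settled, the only new computation being the normal radius. The structural remark that makes this work is that a round factor $S^2$ enters the product as a totally geodesic minimal submanifold of its own sphere, so its block of the second fundamental form in \eqref{base equation} carries no tangential-normal term $H_i^{\xi_i}$. Since every two-dimensional factor has $\lambda_i = \sqrt{k_i/dim\ M} = 1/\sqrt{3}$, such a block contributes exactly $(1-\sqrt{3}b_i t)^2$ to $det(I-tH^v_{ij})$, i.e. the very expression that an $\mathbb{R}P^2$ block takes once its normal component $\xi_i$ is forced to vanish.

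First I would record the Jacobian for $S^2 \times S^2 \times \mathbb{R}P^2$,
\[
det(I-tH^v_{ij}) = (1-\sqrt{3}b_1 t)^2(1-\sqrt{3}b_2 t)^2\left((1-\sqrt{3}b_3 t)^2 - c_3 t^2\right),
\]
subject to $b_1+b_2+b_3 = 0$ and $b_1^2+b_2^2+b_3^2+c_3 = 1$, and observe that it is precisely the triple-Veronese minimization constrained to the face $c_1 = c_2 = 0$. The previous theorem gives $det(I-tH^v_{ij}) \geq E(t)$ for every admissible normal $v$ of the triple product, with the minimum attained at a configuration in which at most one $c_i$ is nonzero (indeed all $c_i = 0$, the eigenvalues $\{\sqrt{2},\sqrt{2},-1/\sqrt{2},-1/\sqrt{2},-1/\sqrt{2},-1/\sqrt{2}\}$ arising purely from the radial terms $b_i$). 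Since each sub-product here still contains at least one $\mathbb{R}P^2$ factor to carry the possibly-nonzero $c$, the triple-Veronese minimizer is itself an admissible normal vector for the sub-product; as restricting the feasible set cannot push the infimum below $E(t)$ while the minimizer remains admissible, the infimum is again exactly $E(t)$ on $(0,\frac{2\sqrt{2}}{7})$, with vanishing angle $19.9^{\circ}$. The same argument on the single face $c_1 = 0$ disposes of $S^2 \times \mathbb{R}P^2 \times \mathbb{R}P^2$.

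It then remains to recompute the normal radius, the only quantity sensitive to the sphere factors. At $x_i \in S^2$ the nearest nontrivial normal chord is the antipode $-x_i$, so flipping one $S^2$ coordinate gives $\langle P,\tilde{P}\rangle = -\lambda_i^2 + \sum_{j\neq i}\lambda_j^2 = 1/3$, an angle $arccos(1/3) \approx 70.5^{\circ}$, whereas the factor $\mathbb{R}P^2 = G(1,3;\mathbb{R})$ contributes $\langle P,\tilde{P}\rangle = 1-\frac{dk}{dim\ M} = 1/2$, i.e. $60^{\circ}$, exactly as in Proposition \ref{normal radius}. The normal radius is the smallest such angle, hence $60^{\circ}$ for each of the two products; as $19.9^{\circ} < 60^{\circ}$, Lawlor's Curvature Criterion yields that both cones are area-minimizing.

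The hard part will be two bookkeeping checks rather than any new estimate. First, the reduction relies on the triple-Veronese minimizer being realizable after the $S^2$ normal directions are discarded: restricting the admissible $v$ only gives the free inequality $inf_v det(I-tH^v_{ij}) \geq E(t)$, and it is the feasibility of that minimizer (guaranteed by the ``at most one nonzero $c_i$'' structure) that supplies the matching upper bound. Second, in the normal-radius computation one must confirm that no mixed, multi-factor normal chord undercuts the single-factor antipodal and exchange chords; this reduces to the single-factor estimates because a normal chord must be orthogonal to each factor's tangent space separately, and combining flips only decreases $\langle P,\tilde{P}\rangle$.
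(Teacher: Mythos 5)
Your proposal is correct and follows exactly the route the paper intends: the $S^2$ factors are reduced factors whose blocks in \eqref{base equation} carry no $H_i^{\xi_i}$ term, so the Jacobian for each sub-product is the triple-Veronese Jacobian restricted to the face where the corresponding $c_i$ vanish; since the minimizer found there already has all $c_i=0$, the infimum is still $E(t)$, and the normal radius remains $60^{\circ}$ because the $\mathbb{R}P^2$ exchange chord ($60^{\circ}$) beats the $S^2$ antipodal chord ($\arccos(1/3)$). The paper states this reduction in one sentence without details; your write-up supplies the feasibility and normal-radius checks that make it rigorous, and is consistent with the paper's argument.
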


(2)~~$\mathbb{R}P^2 \times G(2,4;\mathbb{R})$($S^2 \times G(2,4;\mathbb{R})$):

This case is similar to $\mathbb{R}P^2 \times \mathbb{R}P^2 \times \mathbb{R}P^2$, we will prove on $[0,\frac{2\sqrt{2}}{7})$, for any chosen unit normal vector $v$,
\begin{equation}
inf_{v}det(I-tH^{v}_{ij})=(1-t\sqrt{2})^2\left(1+\frac{t}{\sqrt{2}}\right)^4=E(t).
\end{equation}

First, $G(2,4;\mathbb{R})$ is embedded in $H(4,\mathbb{R})$(or its hyperplane of trace zero), the normal vector $\xi_{2}$ can be given by $diag\{a,-a,b,-b\}$(to see this, first assume $\xi_{2}$ is an any given normal vector, if $g_{\ast}\xi_{2}:=g\xi_{2}g^{T}$ is diagonal for an isotropy isometry action $g$, we could change the standard orthonormal tangent basis $E_{a}^{\alpha}:=E_{\alpha a}+E_{a \alpha}(1\leq a \leq 2, 3\leq \alpha \leq 4)$ given in \cite{Cui2021} to $(g_{-1})_{\ast}E_{a}^{\alpha}$, note the second fundamental forms is also equivariant, then the computation is simplified). The result $H_{2}^{\xi_{2}}$ has value $diag\{-a+b,a+b,-a-b,a-b\}$. Note the image lies in a sphere of radius $\frac{1}{\sqrt{2}}$, then the value of $H_{2}^{\xi_{2}}$ at the sphere of radius $1$ is given by $\frac{1}{\sqrt{2}}diag\{-a+b,a+b,-a-b,a-b\}$, we let $x=a-b,y=a+b$. The discussion for the first factor $\mathbb{R}P^2$ is the same to $\mathbb{R}P^2 \times \mathbb{R}P^2 \times \mathbb{R}P^2$, we assume $|\xi_{1}|=c$, since $b_{1}+\sqrt{2}b_{2}=0$, then

\begin{equation}
\begin{aligned}
det(I-tH^{v}_{ij})&=((1-\sqrt{3}b_{1}t)^2-c^2t^2)\left((1+\frac{\sqrt{3}}{2}b_{1}t)^2-\frac{3}{4}x^2t^2\right)\\
&\times \left((1+\frac{\sqrt{3}}{2}b_{1}t)^2-\frac{3}{4}y^2t^2\right),
\end{aligned}
\end{equation}
it subject to
\begin{equation}
\frac{3}{2}b_{1}^2+c^2+\frac{1}{2}(x^2+y^2)=1.
\end{equation}

Fix $b_{1}$, let $A=(1-\sqrt{3}b_{1}t)^2$, $B=(1+\frac{\sqrt{3}}{2}b_{1}t)^2$, $\beta_{1}=c^2$, $\beta_{2}=x^2$, $\beta_{3}=y^2$, then we want to get the minimum value of
\begin{equation}
f(\beta_{1},\beta_{2},\beta_{3})=(A-t^2\beta_{1})\left(B-\frac{3}{4}t^2\beta_{2}\right)\left(B-\frac{3}{4}t^2\beta_{3}\right),
\end{equation}
subject to
$D=\{(\beta_{1},\beta_{2},\beta_{3})\in \mathbb{R}^3| \beta_{1}+\frac{1}{2}\beta_{2}+\frac{1}{2}\beta_{3}=1-\frac{3}{2}b_{1}^2, \beta_{i}\geq 0(i=1,2,3)\}$.

We can use the method of Lagrange Multiplier to show that there are no minimum points in the interior of $D$, let $g(\beta_{1},\beta_{2},\beta_{3})=\beta_{1}+\frac{1}{2}\beta_{2}+\frac{1}{2}\beta_{3}-c$, where $c$ is a positive number. The gradient of $g$: $\nabla g=(1,\frac{1}{2},\frac{1}{2})$, the gradient of $f$ is given by $\nabla f=(\frac{\partial f}{\partial \beta_{1}},\frac{\partial f}{\partial \beta_{2}},\frac{\partial f}{\partial \beta_{3}})$, where
\begin{equation}
\begin{cases}
\frac{\partial f}{\partial \beta_{1}}&=-t^2(B-\frac{3}{4}t^2\beta_{2})(B-\frac{3}{4}t^2\beta_{3})<0, \\
\frac{\partial f}{\partial \beta_{2}}&=-\frac{3}{4}t^2(A-t^2\beta_{1})(B-\frac{3}{4}t^2\beta_{3})<0, \\
\frac{\partial f}{\partial \beta_{3}}&=-\frac{3}{4}t^2(A-t^2\beta_{1})(B-\frac{3}{4}t^2\beta_{2})<0.
\end{cases}
\end{equation}

At a critical point $(\beta_{1},\beta_{2},\beta_{3})$,
\begin{equation}
\nabla f // \nabla g \Leftrightarrow \frac{\partial f}{\partial \beta_{2}}=\frac{\partial f}{\partial \beta_{3}}=\frac{1}{2}\frac{\partial f}{\partial \beta_{1}},
\end{equation}
then $\beta_{2}=\beta_{3}$, $B-\frac{3}{4}t^2\beta_{2}=B-\frac{3}{4}t^2\beta_{3}=:\lambda>0$, $A-t^2\beta_{1}=\frac{2}{3}(B-\frac{3}{4}t^2\beta_{2})=\frac{2\lambda}{3}$.

The Hessian matrix $Hess(f)$ is of the form
\begin{equation}
\begin{pmatrix}
0 &c &c \\
c &0 &\frac{c}{2} \\
c &\frac{c}{2} &0
\end{pmatrix},
\end{equation}
where $c=\frac{3t^4\lambda}{4}$.

A tangent vector $v$ of the algebraic manifold $g=0$ is given by $\{v=(x,y,z)\in \mathbb{R}^{3}|2x+y+z=0\}$, then the quadratic form $vHess(f)v^{T}=-c(y^2+yz+z^2)<0$ shows that there are no minimum points in the interior of $D$.

On boundary $\partial D$, assume $\beta_{1}=0$, then
\begin{equation}
\frac{f}{A}=B^2-\frac{3}{4}t^2(\beta_{2}+\beta_{3})+\frac{9}{16}t^4\beta_{2}\beta_{3},
\end{equation}
attains minimum when at least one of $\beta_{2},\beta_{3}$ is zero.

assume $\beta_{2}=0$, then
\begin{equation}
\frac{f}{B}=AB-t^2\left(B\beta_{1}+\frac{3A}{2}\frac{\beta_{3}}{2}\right)+\frac{3}{4}t^4\beta_{1}\beta_{3},
\end{equation}
attains minimum when $\beta_{3}=0$ if $B\geq \frac{3A}{2}$, when $\beta_{1}=0$ if $B\leq \frac{3A}{2}$.

So,  $f(\beta_{1},\beta_{2},\beta_{3})$ attains minimum when at least two of $\beta_{1},\beta_{2},\beta_{3}$ are zeros.

~($a$): $\beta_{2}=\beta_{3}=0$,
\begin{equation}
det(I-tH^{v}_{ij})=\left(1-2\sqrt{3}b_{1}t+\frac{9}{2}b_{1}^2t^2-t^2\right)\left(1+\frac{\sqrt{3}}{2}b_{1}t\right)^4,
\end{equation}
and $b_{1}^2\leq \frac{2}{3}$.

This equation is just \eqref{boundary equation}, so on $0<t<\frac{2\sqrt{2}}{7}$, the minimum is $E(t)$ and it is attained when $b_{1}=\sqrt{\frac{2}{3}}$, $c=0$.

~($b$): $\beta_{1}=\beta_{3}=0$, i.e. $c=x=0$, (the same for $\beta_{1}=\beta_{2}=0$),
\begin{equation}
f=AB\left(B-\frac{3}{4}y^2t^2\right),
\end{equation}
subject to $\frac{3}{2}b_{1}^2+\frac{1}{2}y^2=1$, i.e. $y^2=2-3b_{1}^2$, then we need to find the minimum of
\begin{equation}
f(b_{1})=(1-\sqrt{3}b_{1}t)^2\left(1+\frac{\sqrt{3}}{2}b_{1}t\right)^2\left(1+\sqrt{3}b_{1}t
+3b_{1}^2t^2-\frac{3}{2}t^2\right),
\end{equation}
where $b_{1}^2\leq \frac{2}{3}$.

And
\begin{equation}
f'(b_{1})=\frac{3}{4}t^2(\sqrt{3} + 6 t b_{1}) (-1 +\sqrt{3} t b_{1}) (2 + \sqrt{3} t b_{1}) (-t +
\sqrt{3} b_{1}+ 3 t b_{1}^2).
\end{equation}

When $0<t<\frac{\sqrt{2}}{4}$, the sign of $f'(b_{1})$ is $+,-$, we need to compare $f(-\sqrt{\frac{2}{3}})$ and $f(\sqrt{\frac{2}{3}})$, the minimum point is $f(\sqrt{\frac{2}{3}})=E(t)$.

When $\frac{\sqrt{2}}{4}<t<\frac{1}{\sqrt{5}}$, the sign of $f'(b_{1})$ is $-,+,-$(we note here it is a little complicated, the signs of $\sqrt{3} + 6 t b_{1}$ and $-t +
\sqrt{3} b_{1}+ 3 t b_{1}^2$ are all depending on $t$), the smaller interior critical point is $b_{1}=x_{0}=-\frac{1}{2\sqrt{3}t}$, we compute that
\begin{equation}
f(x_{0})-f(\sqrt{\frac{2}{3}})=\frac{(\sqrt{2} - 2 t)(\sqrt{2} + 4 t)^2 (-13 \sqrt{2} + 78 t +
   48\sqrt{2} t^2 + 16 t^3)}{1024},
\end{equation}
it is positive.

Hence, for case($b$), when $0<t<\frac{1}{\sqrt{5}}$, $f(b_{1})$ attains the minimum $E(t)$ when $b_{1}=\sqrt{\frac{2}{3}}$.

In summary, when $0<t<\frac{2\sqrt{2}}{7}$, the minimum of $det(I-tH^{v}_{ij})$ is $E(t)$. The vanishing angle associated to $E(t)$ is $19.9^{\circ}$, and $tan(19.9^{\circ})\leq \frac{2\sqrt{2}}{7}$,  the normal radius is $arccos(\frac{1}{2})=60^{\circ}$, then we have

\begin{theorem}
The cone over $\mathbb{R}P^2 \times G(2,4;\mathbb{R})$ is area-minimizing.
\end{theorem}

Moreover, it is easy to see that the case for $\mathbb{R}P^2 \times G(2,4;\mathbb{R})$ is attributed to the above discussions,  by a further checking for normal radius, we also have

\begin{theorem}
The cone over $\mathbb{R}P^2 \times G(2,4;\mathbb{R})$ is area-minimizing.
\end{theorem}

(3)~~$\mathbb{R}P^2 \times \mathbb{R}P^4$($S^2 \times \mathbb{R}P^4$,$S^4 \times \mathbb{R}P^2$):

$\mathbb{R}P^4$ is embedded in $H(5,\mathbb{R})$, the normal vector $\xi_{2}$ can be given by $diag\{0,c_{1},c_{2},c_{3},c_{4}\}$ where $\sum_{i=1}^4c_{i}=0$. Note the image lies in a sphere of radius $\frac{\sqrt{2}}{\sqrt{5}}$, then the value of $H_{2}^{\xi_{2}}$ at the sphere of radius $1$ is given by $\frac{\sqrt{2}}{\sqrt{5}}diag\{c_{1},c_{2},
c_{3},c_{4}\}$. Denote $|\xi_{1}|=d$, let $\alpha^2= \sum_{i=1}^{4} c_{i}^2$, then $|\xi_{2}|^2=\frac{\alpha^2}{2}$, hence $\frac{\alpha^2}{2}+\frac{3b_{1}^2}{2}+d^2=1$. Since $\lambda_{1}=\frac{1}{\sqrt{3}}$, so $b_{1}+\sqrt{2}b_{2}=0$, and
\begin{equation}
det(I-tH^{v}_{ij})=((1-\sqrt{3}b_{1}t)^2-d^2t^2)\prod_{i=1}^{4}\left(1+
\frac{\sqrt{3}}{2}b_{1}t-\frac{\sqrt{3}}{\sqrt{5}}c_{i}t\right).
\end{equation}

Follow \cite{lawlor1991sufficient}, for fixed $b_{1},d$, on $t\in(0, \frac{1}{\sqrt{5}})$, the minimum of $\prod_{i=1}^{4}(1+
\frac{\sqrt{3}}{2}b_{1}t-\frac{\sqrt{3}}{\sqrt{5}}c_{i}t)$ is attained when $c_{2}=c_{3}=c_{4}=-\frac{c_{1}}{3}$ are all negative, and it is $(1+
\frac{\sqrt{3}}{2}b_{1}t-\frac{\sqrt{3}}{\sqrt{5}}c_{1}t)(1+
\frac{\sqrt{3}}{2}b_{1}t+\frac{1}{\sqrt{15}}c_{1}t)^3$, for convenience, we let $b$ be $\frac{\sqrt{2}}{\sqrt{3}}c_{1}$, let $a$ be $b_{1}$, then  we need to find the minimum of
\begin{equation}
\begin{aligned}
g(a,b)&=\left((1-\sqrt{3}at)^2-(1-\frac{3}{2}a^2-b^2)t^2\right) \\
&\times \left(1+\frac{\sqrt{3}}{2}at-\frac{3}{\sqrt{10}}bt\right)
\left(1+\frac{\sqrt{3}}{2}at+\frac{1}{\sqrt{10}}bt\right)^3
\end{aligned}
\end{equation}
in the domain $D=\{(a,b)\in \mathbb{R}^2|3a^2+2b^2\leq2, b\geq 0\}$.

There are no critical points in the interior of $D$ when $t$ restricted on $(0,\frac{1}{\sqrt{5}})$ by using $Mathematica$.

The boundary of $D$ are divided into two parts,

~($a$): $b = 0, a^2\leq \frac{2}{3}$,
\begin{equation}
g(a,0)=\frac{1}{32}(2 + \sqrt{3}at)^4(2 - 4 \sqrt{3}at-2t^2+9a^2t^2),
\end{equation}
this is just the function \ref{boundary equation} for the boundary case of $\mathbb{R}P^2 \times \mathbb{R}P^2 \times \mathbb{R}P^2$, so for this case, when $0<t<\frac{2\sqrt{2}}{7}$, the minimum is $E(t)$.

~($b$): on the ellipse $b=\sqrt{1-\frac{3}{2}a^2}$, define $f(a)=g(a,\sqrt{1-\frac{3}{2}a^2})$, then
\begin{equation}
\begin{aligned}
f(a)&=\frac{(1-\sqrt{3}at)^2}{10000} (10+5\sqrt{3}at-3\sqrt{5}\sqrt{2 - 3 a^2}t)\\
&\times(10+5\sqrt{3}at+ \sqrt{5}\sqrt{2 - 3 a^2}t)^3,
\end{aligned}
\end{equation}
and
\begin{equation}
\begin{aligned}
f'(a)&=\frac{3t^2(\sqrt{3}at-1)}{500} (10+5\sqrt{3}at+\sqrt{5}\sqrt{2-3a^2}t)^2 \\
&\times (12a -\sqrt{3}t+ 12\sqrt{3}a^2t-3\sqrt{5}a\sqrt{2-3a^2}t).
\end{aligned}
\end{equation}

Let $h(a)=12a -\sqrt{3}t+ 12\sqrt{3}a^2t-3\sqrt{5}a\sqrt{2-3a^2}t$, we will show that $h$ is increasing in $a$. Since in the following we can see $h'(a)\rightarrow +\infty$ when $a\rightarrow \pm \sqrt{\frac{2}{3}}$, it suffices to prove that the minimum value of $h'(a)$ is no less than zero.

By using $Mathematica$,
\begin{equation}
h'(a)=12+24\sqrt{3}at+\frac{9\sqrt{5}a^2t}{\sqrt{2 - 3 a^2}}-3\sqrt{5}\sqrt{2 - 3 a^2}t,
\end{equation}
and
\begin{equation}
h''(a)=\frac{6t(9\sqrt{5}a-9\sqrt{5}a^3+4\sqrt{3}(2-3a^2)^{\frac{3}{2}})}{(2-3a^2)^{\frac{3}{2}}}.
\end{equation}

It happens that the minimum points of $h'(a)$, i.e. the roots of $h''(a)$, is independent of $t$.

The numerical solution of $h''(a)=0$ is given by:
\begin{equation}
{a\rightarrow -0.555087}, {a \rightarrow 0.922469 - 0.070777i}, {a \rightarrow 0.922469 + 0.070777i}.
\end{equation}

the unique real root $x_{0}=-0.555087$ is the minimum point of $h'(a)$. When $a\rightarrow x_{0}$, the minimum value of $h'(a)$ is $12-24.053t$ which is decreasing in $t$, and when $t=\frac{2\sqrt{2}}{7}$, it is $2.28114$, then, we have proved that $h(a)$ is increasing in $a$.

Additionally, $h(-\sqrt{\frac{2}{3}})=7\sqrt{3}t-4\sqrt{6}<0$ and
$h(\sqrt{\frac{2}{3}})=7\sqrt{3}t+4\sqrt{6}>0$, it tells us that the sign of $f'(a)$ is $+,-$.

Therefore the minimum value of $f(a)$ is the smaller of $f(-\sqrt{\frac{2}{3}})$ and $f(-\sqrt{\frac{2}{3}})$, which is $f(-\sqrt{\frac{2}{3}})=E(t)$ if $0 < t <\frac{2\sqrt{2}}{7}$.

The vanishing angle associated to $E(t)$ is $19.9^{\circ}$, and $tan(19.9^{\circ})\leq \frac{2\sqrt{2}}{7}$, the normal radius is $arccos(\frac{1}{2})=60^{\circ}$, hence

\begin{theorem}
The cone over $\mathbb{R}P^2 \times \mathbb{R}P^4$ is area-minimizing.
\end{theorem}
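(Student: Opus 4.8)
The plan is to verify Lawlor's Curvature Criterion in the simplified form recalled in Section 2.3: it suffices to exhibit an interval $(0,t_0)$ on which the Jacobian $\inf_v\det(I-tH^v_{ij})$ dominates the polynomial $E(t)=(1-t\sqrt2)^2(1+t/\sqrt2)^4$, since $E$ has vanishing angle $19.9^\circ$ with $\tan(19.9^\circ)\le\tfrac{2\sqrt2}{7}$, and then to confirm that twice this angle stays below the normal radius. By Proposition \ref{normal radius} the normal radius of this cone is $\arccos(\tfrac12)=60^\circ$, so the final comparison $2\cdot 19.9^\circ<60^\circ$ will close the argument the moment the Jacobian lower bound is in place.

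First I would record the Jacobian explicitly. Using the block-diagonal form \eqref{base equation two} together with the diagonal models for the second fundamental forms of the Veronese $\mathbb{R}P^2\hookrightarrow H(3;\mathbb{R})$ and of $\mathbb{R}P^4\hookrightarrow H(5;\mathbb{R})$ (normal vector $\xi_2=\mathrm{diag}\{0,c_1,c_2,c_3,c_4\}$ with $\sum_i c_i=0$), and rescaling to the unit sphere, one obtains
\begin{equation}
\det(I-tH^v_{ij})=\big((1-\sqrt{3}\,b_1 t)^2-d^2 t^2\big)\prod_{i=1}^4\Big(1+\tfrac{\sqrt{3}}{2}b_1 t-\tfrac{\sqrt{3}}{\sqrt{5}}c_i t\Big),
\end{equation}
subject to $\tfrac12\alpha^2+\tfrac32 b_1^2+d^2=1$, where $\alpha^2=\sum_i c_i^2$ and $d=|\xi_1|$.

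The central reduction is to minimize over the admissible normal directions. For fixed $b_1,d$ and $t\in(0,1/\sqrt5)$, the product over the four factors is minimized, following the eigenvalue-splitting result in the Appendix of \cite{lawlor1991sufficient}, when three of the $c_i$ coincide and are negative, i.e. $c_2=c_3=c_4=-c_1/3$. Setting $a=b_1$ and $b=\tfrac{\sqrt2}{\sqrt3}c_1$ reduces everything to minimizing a two-variable function $g(a,b)$ over the half-ellipse $D=\{3a^2+2b^2\le 2,\ b\ge 0\}$. A direct computation shows $g$ has no interior critical points on $(0,1/\sqrt5)$, so the minimum lies on $\partial D$, which splits into the segment $b=0$ and the arc $b=\sqrt{1-\tfrac32 a^2}$. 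On the segment, $g$ collapses to exactly the boundary polynomial \eqref{boundary equation} already analyzed for $\mathbb{R}P^2\times\mathbb{R}P^2\times\mathbb{R}P^2$, whose minimum on $(0,\tfrac{2\sqrt2}{7})$ is $E(t)$.

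I expect the arc to be the main obstacle. There one sets $f(a)=g(a,\sqrt{1-\tfrac32 a^2})$, computes $f'(a)$, and factors off a manifestly positive part, leaving a monotonicity question for the auxiliary factor $h(a)=12a-\sqrt3\,t+12\sqrt3\,a^2 t-3\sqrt5\,a\sqrt{2-3a^2}\,t$. The delicate point is to prove $h$ is increasing in $a$: since $h'(a)\to+\infty$ as $a\to\pm\sqrt{2/3}$, it suffices to bound the minimum of $h'$ from below, and the fortunate feature is that the roots of $h''$, hence the minimizer of $h'$, are independent of $t$. Locating the unique real root $x_0\approx -0.555$ and checking $h'(x_0)=12-24.053\,t>0$ for $t\le\tfrac{2\sqrt2}{7}$ forces $h>0$ to the right and $h<0$ to the left, so $f'$ has sign pattern $+,-$ and the minimum of $f$ is attained at an endpoint, namely $f(-\sqrt{2/3})=E(t)$. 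Combining the two boundary pieces gives $\inf_v\det(I-tH^v_{ij})=E(t)$ on $(0,\tfrac{2\sqrt2}{7})$, and the angle comparison recorded in the first paragraph then completes the proof.
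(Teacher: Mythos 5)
Your proposal follows the paper's own proof essentially step for step: the same Jacobian formula with the eigenvalue-splitting reduction $c_2=c_3=c_4=-c_1/3$, the same two-variable domain $D$ with no interior critical points, the same boundary split into the segment (reducing to the $\mathbb{R}P^2\times\mathbb{R}P^2\times\mathbb{R}P^2$ polynomial) and the elliptic arc handled via the monotonicity of $h(a)$ with the $t$-independent root $x_0\approx-0.555$, and the same final comparison of the $19.9^\circ$ vanishing angle of $E(t)$ with the $60^\circ$ normal radius. The argument is correct and matches the paper's.
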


Moreover, it is easy to see that the cases for $S^2 \times \mathbb{R}P^4$ and $S^4 \times \mathbb{R}P^2$ are attributed to the above discussions, by a further checking for normal radius, we also have

\begin{theorem}
The cones over $S^2 \times \mathbb{R}P^4$, $S^4 \times \mathbb{R}P^2$ are area-minimizing.
\end{theorem}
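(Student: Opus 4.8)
The plan is to reduce both cones to the determinant analysis already carried out in case (3) for $\mathbb{R}P^2 \times \mathbb{R}P^4$, leaving only the normal radius to be recomputed. The starting observation is that under the standard embedding $S^2 \equiv \mathbb{C}P^1$ and $S^4 \equiv \mathbb{H}P^1$ sit in the unit sphere as totally geodesic great subspheres, so they are $reduced \ factors$: in the block form \eqref{base equation} their second fundamental form vanishes, and they contribute only the scalar determinant factor $\left(1-\frac{b_i t}{\lambda_i}\right)^{k_i}$. Since $\dim S^2 = \dim \mathbb{R}P^2 = 2$ and $\dim S^4 = \dim \mathbb{R}P^4 = 4$, the minimal product weights $\lambda_i = \sqrt{\dim M_i/\dim M}$ coincide with those of $\mathbb{R}P^2 \times \mathbb{R}P^4$. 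Hence $\det(I-tH^v_{ij})$ for each of $S^2 \times \mathbb{R}P^4$ and $S^4 \times \mathbb{R}P^2$ is obtained from the function $g(a,b)$ of case (3) by forcing the normal of the factor that has become a sphere to vanish.

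Concretely, I would first treat $S^2 \times \mathbb{R}P^4$: replacing $\mathbb{R}P^2$ by $S^2$ forces its normal $d = |\xi_1| = 0$, which is precisely the constraint $1 - \frac{3}{2}a^2 - b^2 = 0$, i.e. the boundary ellipse $3a^2 + 2b^2 = 2$. This is exactly boundary case (b), where it was already shown that $\inf_v \det(I-tH^v_{ij}) = \min_a f(a) = E(t)$ for $0 < t < \frac{2\sqrt{2}}{7}$, attained at $a = -\sqrt{\frac{2}{3}}$. Next, for $S^4 \times \mathbb{R}P^2 = \mathbb{R}P^2 \times S^4$ I would replace $\mathbb{R}P^4$ by $S^4$, which kills the $\mathbb{R}P^4$ normal, i.e. sets $b = 0$; this is boundary case (a), where $g(a,0)$ is the function \eqref{boundary equation}, again giving $\inf_v \det(I-tH^v_{ij}) = E(t)$ on $(0,\frac{2\sqrt{2}}{7})$. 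Thus in both cases the relevant estimated vanishing angle is the one associated to $E(t)$, namely $19.9^{\circ}$, and $\tan 19.9^{\circ} \le \frac{2\sqrt{2}}{7}$.

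It then remains to perform the ``further checking'' of the normal radius and verify it exceeds twice the vanishing angle, i.e. exceeds $2\cdot 19.9^{\circ} = 39.8^{\circ}$. Writing $S^2 = G(1,2;\mathbb{C})$ and $S^4 = G(1,2;\mathbb{H})$, I would apply the mixed-field normal radius formula $\arccos\!\left(1 - \frac{\min_i d_i k_i}{\dim M}\right)$. For $S^2 \times \mathbb{R}P^4$ the factors give $d_1 k_1 = 2\cdot 2 = 4$ and $d_2 k_2 = 1\cdot 5 = 5$, so the normal radius is $\arccos\!\left(1-\frac{4}{6}\right) = \arccos\frac{1}{3} \approx 70.5^{\circ}$; for $S^4 \times \mathbb{R}P^2$ they give $d_1 k_1 = 4\cdot 2 = 8$ and $d_2 k_2 = 1\cdot 3 = 3$, so the normal radius is $\arccos\!\left(1-\frac{3}{6}\right) = \arccos\frac{1}{2} = 60^{\circ}$. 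Both exceed $39.8^{\circ}$, so the simplified Curvature Criterion applies and the cones are area-minimizing.

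The main obstacle, and the only genuinely new point beyond the preceding theorem, lies in the verification behind the first paragraph: one must confirm that $\mathbb{C}P^1$ and $\mathbb{H}P^1$ really embed as totally geodesic subspheres (so that no second fundamental form term survives in \eqref{base equation}) and that the weights $\lambda_i$ are unchanged, so that the expression $g(a,b)$ transfers verbatim and the two reduced cones correspond exactly to the vanishing-normal slices $d=0$ and $b=0$. Once this identification is secured the determinant minimization is free, being literally the boundary computations of case (3) already established, and only the short normal radius recomputation above is required.
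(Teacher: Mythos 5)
Your proposal is correct and follows essentially the same route the paper intends: it treats $S^2\equiv\mathbb{C}P^1$ and $S^4\equiv\mathbb{H}P^1$ as reduced factors so that $\inf_v\det(I-tH^v_{ij})$ reduces to the boundary slices $d=0$ (case (b)) and $b=0$ (case (a)) of the $\mathbb{R}P^2\times\mathbb{R}P^4$ computation, both of which already yield $E(t)$ on $\left(0,\tfrac{2\sqrt{2}}{7}\right)$, and then recomputes the normal radii $\arccos\tfrac{1}{3}$ and $\arccos\tfrac{1}{2}$ via the mixed-field formula. This is exactly the "attributed to the above discussions, by a further checking for normal radius" argument of the paper, with the omitted details filled in correctly.
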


(4)~~$\mathbb{R}P^3 \times \mathbb{R}P^3$($S^3 \times \mathbb{R}P^3$):

$\mathbb{R}P^3$ is embedded in $H(4,\mathbb{R})$, the normal vector $\xi_{1}$ can be given by $diag\{0,c_{1},c_{2},c_{3}\}$, respectively, $\xi_{2}=diag\{0,d_{1},d_{2},d_{3}\}$, where $\sum_{i=1}^3c_{i}=0$ and $\sum_{i=1}^3d_{i}=0$. Note the image lies in a sphere of radius $\frac{\sqrt{3}}{2\sqrt{2}}$, then the value of $H_{1}^{\xi_{1}}$ at the sphere of radius $1$ is given by $\frac{\sqrt{3}}{2\sqrt{2}}diag\{c_{1},c_{2},c_{3}\}$, the similar results for $H_{2}^{\xi_{2}}$. $|\xi_{1}|^2=\frac{1}{2}\sum_{i=1}^3c_{i}^2$, $|\xi_{2}|^2=\frac{1}{2}\sum_{i=1}^3d_{i}^2$. Now $\lambda_{1}=\lambda_{2}=\frac{1}{\sqrt{2}}$, then $b_{1}+b_{2}=0$, $\frac{\sum_{i=1}^3c_{i}^2}{2}+\frac{\sum_{i=1}^3d_{i}^2}{2}+2b_{1}^2=1$, and
\begin{equation}
det(I-tH^{v}_{ij})=\prod_{i=1}^{3}\left(1-\sqrt{2}b_{1}t-\frac{\sqrt{3}}{2}c_{i}t\right)\left(1+\sqrt{2}b_{1}t-\frac{\sqrt{3}}{2}d_{i}t\right).
\end{equation}

For fixed $b_{1},d_{i}(i=1,2,3)$, the minimum of $\prod_{i=1}^{3}(1-\sqrt{2}b_{1}t-\frac{\sqrt{3}}{2}c_{i}t)$ is attained when $c_{1}$ is positive and $c_{2}=c_{3}=-\frac{c_{1}}{2}$, similar for the case considering $b_{1},c_{i}(i=1,2,3)$ fixed, all these can attained on $t\in (0,\frac{1}{\sqrt{5}})$(\cite{lawlor1991sufficient}). Then we can consider these normal vectors $v$ given as follows: let $v=(v_{1},v_{2})$, $v_{1}=b_{1}x_{1}+\xi_{1}$, $v_{2}=-b_{1}x_{2}+\xi_{2}$, where $x_{1},x_{2}$ is the position vectors, $\xi_{1}=diag\{0,c_{1},-\frac{c_{1}}{2},-\frac{c_{1}}{2}\}$, $\xi_{2}=diag\{0,d_{1},-\frac{d_{1}}{2},-\frac{d_{1}}{2}\}$, and $c_{1},d_{1}\geq 0$, $8b_{1}^2+3c_{1}^2+3d_{1}^2=4$. For convenience, we set $b_{1}=a$, $c_{1}=b\geq 0$, $d_{1}=c\geq 0$, then
\begin{align}
det(a,b,c):=det(I-tH^{v}_{ij})&=\left(1-\sqrt{2}at-\frac{\sqrt{3}}{2}bt\right)
\left(1-\sqrt{2}at+\frac{\sqrt{3}}{4}bt\right)^2 \\ \notag
&\times\left(1+\sqrt{2}at-\frac{\sqrt{3}}{2}ct\right)
\left(1+\sqrt{2}at+\frac{\sqrt{3}}{4}ct\right)^2,
\end{align}
and $8a^2+3b^2+3c^2=4, b\geq0, c\geq0$.

Since $b,c$ are in the symmetric positions, we can further assume $a\geq0$.

We let $f(a,b)=det(a,b,\sqrt{-\frac{8 a^2}{3}-b^2+\frac{4}{3}})$. First, we fix $a$ and determine the sign of $f'(b)$, here $ a\geq0, 0\leq b \leq \sqrt{-\frac{8 a^2}{3}+\frac{4}{3}}$.

By using $Mathematica$,
\begin{equation}\label{first order}
\begin{aligned}
f'(b)&=\frac{9}{512} b t^3 \left(-4 \sqrt{2} a t+\sqrt{3} b t+4\right) \left(t \sqrt{-8 a^2-3 b^2+4}+4 \sqrt{2} a t+4\right) \\
&\times (\sqrt{2} a t \sqrt{-8 a^2-3 b^2+4}+\sqrt{-8 a^2-3 b^2+4} \\
&-4 a^2 t+\sqrt{6} a b t-16 \sqrt{2} a-3 b^2 t-\sqrt{3} b+2 t )
\end{aligned}
\end{equation}
The last factor in \ref{first order} is denoted by $g(a,b)$, then easy to see it has the same sign of $f'(b)$.

Now
\begin{equation}
g\left(a,\sqrt{\frac{4}{3}-\frac{8 a^2}{3}}\right)=2 \left(2 a^2 t+\sqrt{2} \sqrt{1-2 a^2} a t-\sqrt{1-2 a^2}-8 \sqrt{2} a-t\right),
\end{equation}
it is less than zero, and
\begin{equation}
g'(b)=-\frac{3 \sqrt{2} a b t}{\sqrt{-8 a^2-3 b^2+4}}-\frac{3 b}{\sqrt{-8 a^2-3 b^2+4}}+\sqrt{6} a t-6 b t-\sqrt{3}
\end{equation}
is also less than zero.

So, the possible sign of $g(b)$(hence that of $f'(b)$) is $+,-$ or $-$, we compute that
\begin{equation}
\begin{aligned}
f(a,0)-f\left(a,\sqrt{\frac{4}{3}-\frac{8 a^2}{3}}\right)&=\frac{a \left(1-2 a^2\right) t^3}{\sqrt{2}}\\
&\times\left(2 a^2 \sqrt{1-2 a^2} t^3-12 a^2 t^2+3 \sqrt{1-2 a^2} t+6\right),
\end{aligned}
\end{equation}
it is obviously no less than zero which shows that when $a$ is fixed, $f(a,b)$ attains minimum value $h(a):=g(\sqrt{\frac{4}{3} - \frac{8}{3}a^2})$(i.e. $c$ is zero).

We compute that
\begin{equation}
\begin{aligned}
h'(a)&=\frac{3}{4} t^2 \left(\sqrt{2} a t+1\right)^2 \left(\sqrt{1-2 a^2} t-2 \sqrt{2} a t+2\right) \\
&\times \left(12 \sqrt{2} a^2 t+4 \sqrt{1-2 a^2} a t-6 a-\sqrt{2} t\right).
\end{aligned}
\end{equation}

Let $q(a):=4 a \sqrt{1-2 a^2} t-\left(-12 \sqrt{2} a^2 t+6 a+\sqrt{2} t\right)$, $4 a \sqrt{1-2 a^2} t\leq \sqrt{2} t$, where the equal sign holds iff $a = \frac{1}{2}$. Next, consider $r(a)=-12 \sqrt{2} a^2 t+6 a+\sqrt{2} t$ as a quadratic function of $a$, then its minimum is $r(0)=\sqrt{2}t$. Therefore $q (a) < 0$, and consequently $h' (a) < 0$.

$h' (a) < 0$, $h$ is decreasing in $a$. Therefore
\begin{equation}
\begin{aligned}
min (h)&= min (f) = h\left(\frac{1}{\sqrt{2}}\right) = f\left(\frac{1}{\sqrt{2}},0\right)\\
&= det\left(\frac{1}{\sqrt{2}}, 0, 0\right)=(1+t)^3(1-t)^3=G(t)
\end{aligned}
\end{equation}
when $t\in (0,\frac{1}{\sqrt{5}})$.

The vanishing angle associated to $G(t)$ is $19^{\circ}$, and $tan(19^{\circ})< \frac{1}{\sqrt{5}}$, the normal radius is $arccos(\frac{1}{3})>70^{\circ}$, then we have

\begin{theorem}
The cone over $\mathbb{R}P^3 \times \mathbb{R}P^3$ is area-minimizing.
\end{theorem}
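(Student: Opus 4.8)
The plan is to apply Lawlor's Curvature Criterion in the same scheme used for the preceding seven-dimensional cases. Since $\dim C = 7$ puts us in the critical regime $\alpha^2 = \dim M = 6$, $m = 6$, the estimated vanishing-angle functions $\theta_1,\theta_2$ from ``The Table'' are not directly available, so the essential task is to determine the exact form of $\inf_v \det(I - t H^v_{ij})$ entering the ODE \eqref{VN} and to verify that on the admissible interval it coincides with the multiplicity-three polynomial $G(t) = (1-t)^3(1+t)^3$ attached to $S^3\times S^3$. Once this is established, the vanishing angle of $G(t)$ (namely $19^\circ$ from Lawlor's table) together with the normal radius $\arccos(\tfrac13)$ given by Proposition \ref{normal radius} will close the argument, because $2\cdot 19^\circ < \arccos(\tfrac13)$.

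First I would parametrize the normal vectors. Embedding each factor $\mathbb{R}P^3 = G(1,4;\mathbb{R})$ in $H(4;\mathbb{R})$ and using the isotropy action to diagonalize the normal direction, I may take $\xi_1 = \mathrm{diag}\{0,c_1,c_2,c_3\}$ and $\xi_2 = \mathrm{diag}\{0,d_1,d_2,d_3\}$ with $\sum c_i = \sum d_i = 0$, rescaled to the unit sphere. Since $\lambda_1 = \lambda_2 = 1/\sqrt2$, minimality forces $b_2 = -b_1$, and the block formula \eqref{base equation} expresses $\det(I - t H^v_{ij})$ as the product of two cubics. The key reduction is Lawlor's one-factor lemma: for fixed $b_1$, the factor $\prod_{i=1}^3\bigl(1 - \sqrt2\,b_1 t - \tfrac{\sqrt3}{2}c_i t\bigr)$ is minimized over trace-free $(c_i)$ by the $2$-$1$ split $c_2 = c_3 = -c_1/2$ with $c_1 \ge 0$, and similarly for $(d_i)$. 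This collapses the problem to three variables $a = b_1$, $b = c_1 \ge 0$, $c = d_1 \ge 0$ subject to the single quadric $8a^2 + 3b^2 + 3c^2 = 4$, and the $b\leftrightarrow c$ symmetry lets me assume $a \ge 0$.

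The core is then a two-variable minimization. Eliminating $c$ by the constraint, I would study $f(a,b)=\det(a,b,c(a,b))$: fixing $a$ and computing $\partial f/\partial b$, I isolate the single factor $g(a,b)$ that can change sign, show it is negative on the constraint boundary and has negative $b$-derivative, and thereby force the sign pattern of $f'(b)$ to be $+,-$ or $-$, so the minimum in $b$ occurs where one of $b,c$ vanishes. This reduces everything to the one-variable function $h(a)$ obtained at $c=0$; differentiating, I would prove $h'(a) < 0$ by showing the factor $q(a)$ is negative (bounding $4a\sqrt{1-2a^2}\,t \le \sqrt2\,t$ and minimizing the remaining quadratic in $a$). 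Monotonicity pins the minimum at the endpoint $a = 1/\sqrt2$, where $b=c=0$ and $\det = (1-t)^3(1+t)^3 = G(t)$, valid on $t \in (0,1/\sqrt5)$.

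The hard part will be precisely this derivative sign analysis: the individual factors of $f'(b)$ and of $h'(a)$ change sign within the admissible region in a $t$-dependent way, so monotonicity cannot simply be read off. The delicate step is to organize the factorizations (produced with \emph{Mathematica}) so that one controlling factor governs the sign while the rest are provably positive on the relevant domain; in particular ruling out any interior critical point of $f$ and excluding the multiplicity-one polynomial $F(t)$, which would correspond to a merely stable, non-minimizing cone. With $\inf_v \det(I - t H^v_{ij}) = G(t)$ in hand, I conclude by the simplified Curvature Criterion: $\tan 19^\circ < 1/\sqrt5$ ensures the vanishing angle lies inside the interval of validity, and $2\cdot 19^\circ = 38^\circ < \arccos(\tfrac13) \approx 70.5^\circ$ yields area-minimization.
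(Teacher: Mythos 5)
Your proposal follows essentially the same route as the paper's own proof of this case: the same diagonalization of normal vectors, the same reduction via Lawlor's two-eigenvalue lemma to the constraint $8a^2+3b^2+3c^2=4$, the same elimination of $c$ and sign analysis of the controlling factor of $\partial f/\partial b$, the same monotonicity argument for $h(a)$ via the bound $4a\sqrt{1-2a^2}\,t\le\sqrt2\,t$, and the same conclusion from $G(t)$, the $19^\circ$ vanishing angle, and the normal radius $\arccos(\tfrac13)$. No substantive differences to report.
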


Moreover, it is easy to see that the case for $S^3 \times \mathbb{R}P^3$ is attributed to the above discussions, by a further checking for normal radius, we also have

\begin{theorem}
The cone over $S^3 \times \mathbb{R}P^3$ is area-minimizing.
\end{theorem}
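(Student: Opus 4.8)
The plan is to treat $S^3\times\mathbb{R}P^3$ as a minimal product in which the sphere is a \emph{reduced factor}: realized as a totally geodesic great $S^3$, its second fundamental form vanishes identically, while $\mathbb{R}P^3=G(1,4;\mathbb{R})$ carries exactly the embedding and second fundamental form data already used in the $\mathbb{R}P^3\times\mathbb{R}P^3$ case. Since $\dim M=3+3=6$, the cone $C$ has dimension $7$, so I first set up $\det(I-tH^{v}_{ij})$ explicitly. With $\lambda_1=\lambda_2=\tfrac{1}{\sqrt2}$ and $b_2=-b_1$, the vanishing of the $S^3$ second fundamental form forces
\begin{equation}
\det(I-tH^{v}_{ij})=(1-\sqrt2\,b_1 t)^3\prod_{i=1}^{3}\left(1+\sqrt2\,b_1 t-\frac{\sqrt3}{2}d_i t\right),
\end{equation}
subject to $\sum_{i=1}^3 d_i=0$ and $\tfrac12\sum_{i=1}^3 d_i^2+2b_1^2=1$. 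After minimizing over the $d_i$ exactly as in the preceding theorem (the minimum occurs at $d_1\ge 0$, $d_2=d_3=-\tfrac{d_1}{2}$), this is precisely the function $\det(a,b,c)$ from the $\mathbb{R}P^3\times\mathbb{R}P^3$ analysis evaluated on the slice $b=0$, with $a=b_1$ and $c=d_1$.

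The key observation is then that the admissible normal configurations for $S^3\times\mathbb{R}P^3$ form exactly the subset $\{b=0\}$ of the configuration space used for $\mathbb{R}P^3\times\mathbb{R}P^3$, so that $\inf_{v}\det(I-tH^{v}_{ij})\ge G(t)$ for $t\in(0,\tfrac{1}{\sqrt5})$ automatically. Conversely, the global minimizer found in the previous theorem, $(a,b,c)=(\tfrac{1}{\sqrt2},0,0)$, already lies in this slice and corresponds to a genuine unit normal $v$ (radial in each factor, with both $\xi_i=0$); hence the infimum is attained and equals $G(t)=(1-t)^3(1+t)^3$. Consequently the estimated vanishing angle is again $19^{\circ}$, and $\tan(19^{\circ})<\tfrac{1}{\sqrt5}$ shows that restricting to $t\in(0,\tfrac{1}{\sqrt5})$ is harmless.

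It then remains only to recompute the normal radius, which is where the two cases genuinely differ. I would run the same exchange argument that precedes Proposition \ref{normal radius}: an exchange inside the totally geodesic $S^3$ factor produces no focal obstruction (its normal-radius contribution is $\tfrac{\pi}{2}$), so the binding factor is $\mathbb{R}P^3=G(1,4;\mathbb{R})$, giving $\arccos\!\left(1-\tfrac{dk}{\dim M}\right)=\arccos\!\left(\tfrac13\right)>70^{\circ}$. Since $2\cdot 19^{\circ}=38^{\circ}<70^{\circ}$, the simplified Curvature Criterion applies and $C$ is area-minimizing. The main obstacle is this last step: one must check carefully that replacing a $G(1,4;\mathbb{R})$ factor by a great $S^3$ does not \emph{decrease} the normal radius below its $\mathbb{R}P^3\times\mathbb{R}P^3$ value, i.e. that the sphere factor creates no nearer pair of points; by contrast, the reduction of $\inf_{v}\det$ to the earlier computation is immediate once the slice identification is made.
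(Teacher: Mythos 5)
Your proposal is correct and follows essentially the same route as the paper: the paper proves the $\mathbb{R}P^3\times\mathbb{R}P^3$ case by reducing $\det(I-tH^{v}_{ij})$ to the function $det(a,b,c)$ with minimum $G(t)$ at $(\tfrac{1}{\sqrt2},0,0)$, and then states that $S^3\times\mathbb{R}P^3$ is ``attributed to the above discussions'' plus a normal-radius check, which is exactly your slice argument ($\{b=0\}$ contains the global minimizer, so the infimum is still $G(t)$) together with the observation that the binding normal-radius contribution $\arccos(\tfrac13)>70^{\circ}$ comes from the $\mathbb{R}P^3$ factor. Your write-up merely makes explicit what the paper leaves implicit.
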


(5)~~$\mathbb{R}P^2 \times \mathbb{C}P^2$:

$\mathbb{R}P^2$ is embedded in $H(3,\mathbb{R})$, the normal vector $\xi_{1}$ can be given by $diag\{0,c,-c\}$, then $|\xi_{1}|^2=c^2$, the image lies in a sphere of radius $\frac{1}{\sqrt{3}}$, then the value of $H_{1}^{\xi_{1}}$ at the sphere of radius $1$ is given by $diag\{c,-c\}$.

$\mathbb{C}P^2$ is embedded in $H(3,\mathbb{C})$, the normal vector $\xi_{2}$ can be given by $diag\{0,b,-b\}$, then $|\xi_{2}|^2=b^2$. Note the image lies in a sphere of radius $\frac{1}{\sqrt{3}}$, then the value of $H_{2}^{\xi_{2}}$ at the sphere of radius $1$ is given by $\frac{1}{\sqrt{3}}diag\{b,b,-b,-b\}$(\cite{Cui2021}).

Now $\lambda_{1}=\frac{1}{\sqrt{3}}$, $\lambda_{2}=\frac{\sqrt{2}}{\sqrt{3}}$, then $b_{1}+\sqrt{2}b_{2}=0$, $b^2+c^2+\frac{3}{2}b_{1}^2=1$, and
\begin{equation}
det(I-tH^{v}_{ij})=\left((1-\sqrt{3}b_{1}t)^2-c^2t^2\right)\left(\left(1+\frac{\sqrt{3}}{2}b_{1}t\right)^2
-\frac{1}{2}b^2t^2\right)^2.
\end{equation}

Let $b_{1}$ be $a$, let $D=\{(a,b)\in \mathbb{R}^2|3a^2+2b^2\leq 2\}$, denote
\begin{equation}
g(a,b)=\left((1-\sqrt{3}at)^2-\left(1-\frac{3}{2}a^2-b^2\right)t^2\right)\left(\left(1+\frac{\sqrt{3}}{2}at\right)^2
-\frac{1}{2}b^2t^2\right)^2.
\end{equation}

There are no critical points in the interior of $D$ when $t$ restricted on $(0,\frac{1}{\sqrt{5}})$ by using $Mathematica$.

On boundary of $D$, i.e. $c=0$, let
\begin{equation}
\begin{aligned}
f(a):=g\left(a,\pm \sqrt{1-\frac{3 a^2}{2}}\right)&=(1-\sqrt{3} a t)^2 \left(1+\frac{\sqrt{3}}{2}at+\frac{bt}{\sqrt{2}}\right)^2 \\
&\times \left(1+\frac{\sqrt{3}}{2}at-\frac{bt}{\sqrt{2}}\right)^2,
\end{aligned}
\end{equation}
where $\frac{3}{2}a^2 +b^2=1$.

Let \begin{equation}
\begin{aligned}
\alpha^2 & \equiv \left(\frac{b_{1}}{\lambda_{1}}\right)^2+\left(\frac{\sqrt{3}}{2}a+\frac{b}{\sqrt{2}}\right)^2+\left(\frac{\sqrt{3}}{2}a
-\frac{b}{\sqrt{2}}\right)^2 \\
&= 3\left(\frac{3}{2}a^2+b^2\right)-2b^2 \\
&\leq 3.
\end{aligned}
\end{equation}

So, follow Lemma 3.2 and the Appendix in \cite{lawlor1991sufficient}, on $(0,\frac{1}{\sqrt{5}})\subset(0,\frac{1}{\sqrt{2}}]\subset(0,\frac{1}{\alpha}\sqrt {\frac{3}{2}}]$, we have
\begin{equation}
f(a)\geq F^2(\alpha,t,3) \geq F^2(\sqrt{3},t,3)=(1-t\sqrt{2})^2\left(1+\frac{t}{\sqrt{2}}\right)^4=E(t),
\end{equation}
the minimum $E(t)$ of $det(I-tH^{v}_{ij})$ can be attained by setting $b=0,c=0$, and $a=\sqrt{\frac{2}{3}}$.

The vanishing angle is $19.9^{\circ}$, and $tan(19.9^{\circ})< \frac{1}{\sqrt{5}}$, since the normal radius is $arccos(\frac{1}{2})=60^{\circ}$, hence

\begin{theorem}
The cone over $\mathbb{R}P^2 \times \mathbb{C}P^2$ is area-minimizing.
\end{theorem}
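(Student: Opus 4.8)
The plan is to compute $\inf_v \det(I - tH^v_{ij})$ explicitly for a unit normal $v$ of the minimal product $\mathbb{R}P^2 \times \mathbb{C}P^2$, show it coincides with the multiplicity-two polynomial $E(t) = (1-t\sqrt{2})^2(1+\frac{t}{\sqrt{2}})^4$ on the admissible range $t \in (0, \frac{1}{\sqrt{5}})$, and then close with the simplified Curvature Criterion exactly as in the preceding cases. First I would record the two diagonalized second fundamental forms: for the Veronese factor $\mathbb{R}P^2 \hookrightarrow H(3;\mathbb{R})$ one may take $\xi_1 = \mathrm{diag}\{0,c,-c\}$, while for $\mathbb{C}P^2 \hookrightarrow H(3;\mathbb{C})$ one may take $\xi_2 = \mathrm{diag}\{0,b,-b\}$, whose rescaled shape operator at radius one has eigenvalues $\pm b/\sqrt{2}$ each of multiplicity two. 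Since $\lambda_1 = \frac{1}{\sqrt{3}}$ and $\lambda_2 = \frac{\sqrt{2}}{\sqrt{3}}$, the minimality constraint forces $b_1 + \sqrt{2}b_2 = 0$, and the normalization becomes $\frac{3}{2}b_1^2 + b^2 + c^2 = 1$. Feeding these into \eqref{base equation two} block by block yields the factorized determinant
\begin{equation}
\det(I - tH^v_{ij}) = \left((1-\sqrt{3}b_1 t)^2 - c^2 t^2\right)\left(\left(1+\tfrac{\sqrt{3}}{2}b_1 t\right)^2 - \tfrac{1}{2}b^2 t^2\right)^2.
\end{equation}

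Next I would set $a := b_1$, eliminate $c$ via $c^2 = 1 - \frac{3}{2}a^2 - b^2$, and minimize the resulting $g(a,b)$ over the compact region $D = \{3a^2 + 2b^2 \le 2\}$. The interior is disposed of by verifying (with a computer algebra system, as in the earlier cases) that $g$ has no critical points there for $t \in (0, \frac{1}{\sqrt{5}})$, so the minimum sits on $\partial D$, where $c = 0$ and $\frac{3}{2}a^2 + b^2 = 1$. The decisive observation is that on this boundary the determinant is a perfect square,
\begin{equation}
f(a) = (1-\sqrt{3}at)^2\left(1+\tfrac{\sqrt{3}}{2}at+\tfrac{b}{\sqrt{2}}t\right)^2\left(1+\tfrac{\sqrt{3}}{2}at-\tfrac{b}{\sqrt{2}}t\right)^2 = \left(\det(I - tB)\right)^2,
\end{equation}
where $B$ is the trace-free $3\times 3$ symmetric matrix with eigenvalues $\sqrt{3}a$, $-\frac{\sqrt{3}}{2}a - \frac{b}{\sqrt{2}}$, and $-\frac{\sqrt{3}}{2}a + \frac{b}{\sqrt{2}}$. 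A direct computation gives $\|B\|^2 = \frac{9}{2}a^2 + b^2 = 3(\frac{3}{2}a^2 + b^2) - 2b^2 \le 3$ on the boundary.

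This recasting is what lets me bypass the lengthy explicit calculus used in cases (2)--(4): I would apply the Appendix of \cite{lawlor1991sufficient} together with its Lemma~3.2 directly to the single matrix $B$. Writing $\alpha^2 = \|B\|^2 \le 3$ and noting that $t \in (0,\frac{1}{\sqrt{5}}) \subset (0, \frac{1}{\alpha}\sqrt{\frac{3}{2}}]$ keeps us inside the admissible wedge, monotonicity of $F(\alpha,t,3)$ in $\alpha$ yields $f(a) \ge F^2(\alpha,t,3) \ge F^2(\sqrt{3},t,3) = E(t)$, with equality forced by $b = 0$, $c = 0$, $a = \sqrt{2/3}$ (for which $\alpha = \sqrt{3}$ and $B$ carries a single positive eigenvalue). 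Hence $\inf_v \det(I - tH^v_{ij}) = E(t)$ on $(0,\frac{1}{\sqrt{5}})$.

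Finally I would conclude as in the previous theorems: the vanishing angle attached to $E(t)$ is $19.9^\circ$ with $\tan(19.9^\circ) < \frac{1}{\sqrt{5}}$, so the solution of the ODE \eqref{VN} never leaves the region where the estimate is valid, while Proposition~\ref{normal radius} gives normal radius $\arccos(\frac{1}{2}) = 60^\circ > 2\cdot 19.9^\circ$; the Curvature Criterion then certifies that the cone is area-minimizing. I expect the genuine obstacle to be the interior analysis of $g(a,b)$, which resists the perfect-square trick and forces the computer-algebra check that no interior critical point undercuts the boundary value $E(t)$. The supporting conceptual point, that the boundary collapses to Lawlor's three-eigenvalue estimate, is precisely the step that renders everything else routine.
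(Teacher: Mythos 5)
Your proposal follows essentially the same route as the paper: the same factorized determinant $\left((1-\sqrt{3}b_1t)^2-c^2t^2\right)\left((1+\tfrac{\sqrt{3}}{2}b_1t)^2-\tfrac{1}{2}b^2t^2\right)^2$, the same computer-algebra elimination of interior critical points of $g(a,b)$ on $D=\{3a^2+2b^2\le 2\}$, and the same boundary argument recognizing $f(a)$ as the square of $\det(I-tB)$ for a trace-free $3\times 3$ matrix with $\|B\|^2=\tfrac{9}{2}a^2+b^2\le 3$, so that Lawlor's Lemma 3.2 and Appendix give $f\ge F^2(\sqrt{3},t,3)=E(t)$, followed by the identical comparison of the $19.9^{\circ}$ vanishing angle with the $60^{\circ}$ normal radius. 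The argument is correct and matches the paper's proof in all essential steps.
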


Moreover, it is easy to see that the case for $S^2 \times \mathbb{C}P^2$ is attributed to the above discussions, by a further checking for normal radius, we also have

\begin{theorem}
The cone over $S^2 \times \mathbb{C}P^2$ is area-minimizing.
\end{theorem}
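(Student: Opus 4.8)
The plan is to reduce the whole verification to case (5) just completed for $\mathbb{R}P^2\times\mathbb{C}P^2$, the \emph{only} new input being the recomputation of the normal radius. Under the standard embedding the factor $S^2$ enters as a totally geodesic great sphere, hence as a \emph{reduced factor}: it contributes no second fundamental form term to \eqref{base equation}. In the notation of case (5) this is exactly the effect of forcing the $\mathbb{R}P^2$-normal variable $c=|\xi_1|$ to vanish, while the $\mathbb{C}P^2$ factor and the scalings $\lambda_1=\frac{1}{\sqrt3}$, $\lambda_2=\frac{\sqrt2}{\sqrt3}$ remain unchanged (the dimensions $2$ and $4$ are the same). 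Thus the Jacobian for $S^2\times\mathbb{C}P^2$ is precisely the restriction of
\[
g(a,b)=\left((1-\sqrt3\,at)^2-\left(1-\tfrac32 a^2-b^2\right)t^2\right)\left(\left(1+\tfrac{\sqrt3}{2}at\right)^2-\tfrac12 b^2t^2\right)^2
\]
to the locus $c=0$, i.e. to the boundary ellipse $\tfrac32 a^2+b^2=1$ of the domain $D$.

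First I would invoke the boundary estimate already established in case (5): on this $c=0$ locus one has $\alpha^2\le 3$, and by Lemma 3.2 together with the Appendix of \cite{lawlor1991sufficient},
\[
f(a)\ge F^2(\alpha,t,3)\ge F^2(\sqrt3,t,3)=(1-t\sqrt2)^2\left(1+\tfrac{t}{\sqrt2}\right)^4=E(t)
\]
on $(0,\tfrac{1}{\sqrt5})$, with the value $E(t)$ attained at $b=0$, $a=\sqrt{2/3}$. Hence $\inf_v\det(I-tH^v_{ij})=E(t)$ for $S^2\times\mathbb{C}P^2$ as well, so the cone inherits the vanishing angle $19.9^\circ$ of $E(t)$, and $\tan 19.9^\circ<\tfrac{1}{\sqrt5}$ keeps us in the admissible range. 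No new calculation is required at this stage; it is literally the boundary subcase of (5).

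It remains to carry out the ``further checking for normal radius''. Here Proposition \ref{normal radius} no longer governs the sphere factor, so I would compute it directly. The relevant symmetric competitor for the round factor is its antipode: replacing $P=(\lambda_1 p,\lambda_2 q)$ by $\tilde P=(-\lambda_1 p,\lambda_2 q)$ gives $\langle P,\tilde P\rangle=1-2\lambda_1^2=1-\tfrac23=\tfrac13$, i.e. an angle $\arccos\tfrac13\approx 70.5^\circ$; the $\mathbb{C}P^2$ factor contributes $\arccos\!\left(1-\tfrac{dk}{\dim M}\right)=\arccos 0=90^\circ$ as before. Taking the minimum, the normal radius of the cone over $S^2\times\mathbb{C}P^2$ is $\arccos\tfrac13\approx 70.5^\circ$, which is in fact \emph{larger} than the $60^\circ$ found for $\mathbb{R}P^2\times\mathbb{C}P^2$ (the round sphere introduces no close focal point analogous to the orthogonal-line competitor of the Veronese $\mathbb{R}P^2$). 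A sanity check is that the same antipodal value $\arccos\tfrac13$ reproduces the known normal radius of the pure-sphere cone $S^2\times S^2\times S^2$, which also has minimizing polynomial $E(t)$.

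Finally, since $2\times 19.9^\circ=39.8^\circ<70.5^\circ$, twice the estimated vanishing angle stays strictly below the normal radius, so Lawlor's Curvature Criterion yields that the cone over $S^2\times\mathbb{C}P^2$ is area-minimizing. I expect the only genuinely delicate point to be the normal-radius computation for the sphere factor: one must confirm that the antipodal competitor really realizes the normal radius and that replacing the Veronese $\mathbb{R}P^2$ by the round $S^2$ does not create a nearer focal point. Granting this, everything else transfers verbatim from case (5).
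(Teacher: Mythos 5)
Your proposal is correct and follows essentially the same route as the paper, which disposes of $S^2\times\mathbb{C}P^2$ in one line by reducing the Jacobian to the $c=0$ (boundary-ellipse) subcase of the $\mathbb{R}P^2\times\mathbb{C}P^2$ analysis and then ``further checking the normal radius.'' Your explicit computation of the normal radius $\arccos\tfrac{1}{3}\approx 70.5^{\circ}$ via the antipodal competitor in the sphere factor, and the comparison $2\times 19.9^{\circ}<70.5^{\circ}$, supplies exactly the detail the paper omits and is consistent with its Proposition on normal radii.
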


\begin{remark}
A special class of cones we note here is, if the cone is not the product of spheres and one of the factors are $S^1$($\cong\mathbb{R}P^1$), then the minimum value of Jacobian $det(I-tH^{v}_{ij})$ attains $F(t)=(1-t\sqrt{5})(1+\frac{t}{\sqrt{5}})^{5}$. Then we can only conclude that these cones are stable, different like the case of product of spheres, the standard embedding of a Grassmannian into sphere is an isolated orbit of some polar group action(i.e. it is not a principal orbit of polar group action talked by Lawlor\cite{lawlor1991sufficient}), then it's still of possibility for these cones being area-minimizing.
\end{remark}

\section{The $Pl\ddot{u}cker \ embedding$ and cones over products of oriented real Grassmannians}
In the last part in \cite{Cui2021}, we have studied the $Pl\ddot{u}cker \ embedding$ of oriented real Grassmannian $\widetilde{G}(l,k;\mathbb{R})$ into unit sphere of exterior vector space, then their cones are proved area-minimizing except $\widetilde{G}(2,4;\mathbb{R})$, in this section, we prove the cone over minimal product of them is also area-minimizing.

First, we show that the $Pl\ddot{u}cker \ embedding$ of oriented real Grassmannian $\widetilde{G}(2,2n+1;\mathbb{R})$(respectively $\widetilde{G}(2,2n;\mathbb{R})$) is equivalent to the isotropy representation of symmetric spaces of type $B$(respectively $D$), thus become symmetric $R-$spaces, see \cite{hirohashi2000area},\cite{kanno2002area}, then it naturally links to the work in \cite{Ohno2021area},\cite{tang2020minimizing}.

For $(G,K)=(SO(2n+1)^2,SO(2n+1))$-symmetric pairs of type $B_{n}$, the involution $\theta$ is given by $\theta(g_{1},g_{2})=(g_{2},g_{1})$ for $g_{1},g_{2}\in SO(2n+1)$. The Cartan decomposition $\mathfrak{g}=\mathfrak{l}\oplus \mathfrak{m}$ is given by: $\mathfrak{g}=\mathfrak{so}(2n+1)\times \mathfrak{so}(2n+1)$, $\mathfrak{l}=\{(x,x)|x\in \mathfrak{so}(2n+1)\}$, $\mathfrak{m}=\{(x,-x)|x\in \mathfrak{so}(2n+1)\}$.

\begin{prop}
the $Pl\ddot{u}cker \ embedding$ of $\widetilde{G}(2,2n+1;\mathbb{R})$ is equivalent to the isotropy representation of $(G,K)=(SO(2n+1)^2,SO(2n+1))$-symmetric spaces of type $B_{n}$.
\end{prop}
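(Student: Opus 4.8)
The plan is to exhibit an explicit $SO(2n+1)$-equivariant linear isomorphism between $\mathfrak{m}$ and the target space $\Lambda^2\mathbb{R}^{2n+1}$ of the Pl\"ucker embedding, under which the Pl\"ucker-embedded $\widetilde{G}(2,2n+1;\mathbb{R})$ becomes an orbit of the isotropy representation; equivalence of the two embeddings then follows once the orbit is identified and the two invariant inner products are matched. First I would unwind the isotropy representation. Writing $K=\{(g,g):g\in SO(2n+1)\}$, the adjoint action of $(g,g)$ on $\mathfrak{m}=\{(x,-x):x\in\mathfrak{so}(2n+1)\}$ is $(x,-x)\mapsto(gxg^{-1},-gxg^{-1})$. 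Hence the projection $(x,-x)\mapsto x$ is a $K$-equivariant linear isomorphism $\mathfrak{m}\xrightarrow{\sim}\mathfrak{so}(2n+1)$ intertwining the isotropy representation with the adjoint representation $\mathrm{Ad}(g):x\mapsto gxg^{-1}$ of $SO(2n+1)$ on its Lie algebra.

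Next I would pass from $\mathfrak{so}(2n+1)$ to $\Lambda^2\mathbb{R}^{2n+1}$ through the standard identification sending a skew-symmetric endomorphism $X$ to the $2$-vector with components $\langle Xe_i,e_j\rangle$ (equivalently $E_{ij}-E_{ji}\mapsto e_i\wedge e_j$). This isomorphism is $SO(2n+1)$-equivariant, turning the adjoint action into $\Lambda^2$ of the standard representation, $g\cdot(u\wedge w)=(gu)\wedge(gw)$. Composing the two steps yields a $K$-equivariant isomorphism $\mathfrak{m}\cong\Lambda^2\mathbb{R}^{2n+1}$.

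It then remains to locate the Pl\"ucker Grassmannian as an isotropy orbit. Under the above identifications a rank-two skew matrix corresponds to a decomposable $2$-vector, and a unit decomposable $2$-vector $e_1\wedge e_2$ is exactly the Pl\"ucker image of the oriented plane spanned by $e_1,e_2$. Since $SO(2n+1)$ acts transitively on oriented orthonormal $2$-frames, the $SO(2n+1)$-orbit of $e_1\wedge e_2$ is precisely the set of all unit decomposable $2$-vectors, i.e. the Pl\"ucker-embedded $\widetilde{G}(2,2n+1;\mathbb{R})$; on the other side this is the $K$-orbit through the corresponding element of $\mathfrak{m}$, hence an orbit of the isotropy representation.

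Finally, to promote this set-theoretic, equivariant correspondence to an equivalence of submanifolds of spheres I must match the metrics, and I expect this to be the only step carrying genuine content. Both the invariant inner product induced on $\mathfrak{m}$ by (a multiple of) the Killing form and the standard Euclidean inner product pulled back to $\Lambda^2\mathbb{R}^{2n+1}$ are $SO(2n+1)$-invariant. For $2n+1\geq 3$ the algebra $\mathfrak{so}(2n+1)$ is simple, so $\Lambda^2\mathbb{R}^{2n+1}\cong\mathfrak{so}(2n+1)$ is an irreducible $SO(2n+1)$-module; by Schur's lemma the two invariant inner products are proportional, and after normalizing both embeddings to the unit sphere they coincide. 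Thus the main obstacle reduces to pinning down the single scaling constant relating the Killing form to the trace form $-\tfrac{1}{2}\mathrm{tr}(XY)$, while the equivariance bookkeeping in the earlier steps is routine.
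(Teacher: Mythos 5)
Your proposal is correct and follows essentially the same route as the paper: identify $\mathfrak{m}$ with $\mathfrak{so}(2n+1)$ carrying the adjoint action, use the correspondence $E_{ij}-E_{ji}\leftrightarrow e_i\wedge e_j$ to match $\mathfrak{so}(2n+1)$ with $\Lambda^{2}\mathbb{R}^{2n+1}$, and recognize both embedded manifolds as the orbit through $e_1\wedge e_2$. The only cosmetic difference is that the paper checks the orbits and metrics coincide by explicit coordinate computation (with the normalization $g(A,B)=-\tfrac12\,\mathrm{tr}\,AB$ making the $G_{ij}$ orthonormal), whereas you argue equivariance abstractly and invoke Schur's lemma for the metric; both are fine.
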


\begin{proof}
$\mathfrak{m}$ is isomorphic to $\mathfrak{so}(2n+1)$, under this identification, the adjoint action of $K$ on $\mathfrak{m}$ is given by: $Ad(k)\cdot x=kxk^{t}$ for $k \in K, x\in \mathfrak{so}(2n+1)$. Choose base point $x_{0}=G_{12}=E_{12}-E_{21}$, we have isotropy subgroup $Z_{K}^{x_{0}}=SO(2)\times SO(2n-1)$, then the orbit through $x_{0}$ is $SO(2n+1)/SO(2)\times SO(2n-1)\cong \widetilde{G}(2,2n+1;\mathbb{R})$.

Given $\mathfrak{so}(2n+1)$ the Euclidean metric: $g(A,B)=-\frac{1}{2}tr AB$ for $A,B\in\mathfrak{so}(2n+1)$, an orthonormal basis is given by: $G_{ij}=E_{ij}-E_{ji}(1\leq i<j\leq 2n+1)$.

$\wedge^{2}\mathbb{R}^{2n+1}$ is equipped with the induced Euclidean metric, then there is an isometric isomorphic:
\begin{equation}
\begin{aligned}
\wedge^{2}\mathbb{R}^{2n+1} &\cong \mathfrak{so}(2n+1) \\
e_{i}\wedge e_{j}&\leftrightarrow G_{ij},
\end{aligned}
\end{equation}
for $1\leq i<j\leq 2n+1$.

The orbit through $e_{1}\wedge e_{2}$ of $Pl\ddot{u}cker \ embedded$ $\widetilde{G}(2,2n+1;\mathbb{R})$ is $Ae_{1}\wedge Ae_{2}$, $A\in SO(2n+1)$, let $Ae_{i}=A_{ji}e_{j}$, the coordinate of this orbit is $Ae_{1}\wedge Ae_{2}=\Sigma_{i<j}(A_{i1}A_{j2}-A_{j1}A_{i2})e_{i}\wedge e_{j}$.

The base point $e_{1}\wedge e_{2}$ is corresponding to $G_{12}$, the orbit of adjoint representation associated to $\widetilde{G}(2,2n+1;\mathbb{R})$  is given by: $AG_{12}A^{t}=\Sigma_{i<j}(A_{i1}A_{j2}-A_{j1}A_{i2})G_{ij}$, then under the above identifications, these two orbits are the same.
\end{proof}

The case for $\widetilde{G}(2,2n;\mathbb{R})$ is similar.

Now, for each $1\leq i\leq m$, assume $f_{i}:M_{i}=\widetilde{G}(l_{i},k_{i};\mathbb{R})\hookrightarrow S^{a_{i}-1}(1)$ is an $Pl\ddot{u}cker \ embedding$, where $a_{i}=C_{k_{i}}^{l_{i}}$ is the combination number.

Consider the minimal product,
\begin{equation}
\begin{aligned}
f:M=\widetilde{G}(l_{1},k_{1};\mathbb{R})\times \cdots \times\widetilde{G}(l_{m},k_{m};\mathbb{R}) &\rightarrow S^{a_{1}+\cdots+ a_{m}-1}(1)\\
(x_{1},\ldots,x_{m})&\mapsto (\lambda_{1}f_{1}(x_{1}),\ldots,\lambda_{m}f_{m}(x_{m})),
\end{aligned}
\end{equation}
where each $f_{i}(x_{i})$ is written in the unit simple $l_{i}$-vectors, $\lambda_{i}=\sqrt{\frac{l_{i}(k_{i}-l_{i})}{dim \ M}}$, $dim\ M=\sum_{i=1}^{m}l_{i}(k_{i}-l_{i})$.

Denote the cone over $M$ by $C$, the upper bound of second fundamental forms of each $\widetilde{G}(l_{i},k_{i};\mathbb{R})$ is $\alpha_{i}^2=4$(\cite{Cui2021}), follow Theorem \ref{forms formula}, we have

\begin{prop}
The upper bound of the second fundamental forms of $C$ at the points belong to the unit sphere is given by $\alpha^2:=sup_{v}|H^{v}|^2=dim \ M$.
\end{prop}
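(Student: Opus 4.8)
The plan is to reduce the statement entirely to the general formula of Theorem \ref{forms formula} and then verify one elementary inequality. First I would recall from \cite{Cui2021} that each Pl\"{u}cker-embedded factor $\widetilde{G}(l_{i},k_{i};\mathbb{R})$ is a minimal submanifold of its unit sphere with single-factor bound $\alpha_{i}^2 := \sup_{\xi}|H_{i}^{\xi}|^2 = 4$, where $\xi$ ranges over unit normals, and that $\dim M_{i} = l_{i}(k_{i}-l_{i})$. Substituting these into Theorem \ref{forms formula} yields
\begin{equation}
\alpha^2 = \sup_{v}|H^{v}|^2 = \dim M \cdot \max\left\{1, \frac{4}{l_{1}(k_{1}-l_{1})}, \ldots, \frac{4}{l_{m}(k_{m}-l_{m})}\right\},
\end{equation}
so the whole claim collapses to showing that each ratio $\frac{4}{l_{i}(k_{i}-l_{i})}$ is at most $1$, i.e. that the bracketed maximum is attained by the constant term $1$.

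The only thing that then has to be checked is the elementary bound $l_{i}(k_{i}-l_{i}) \geq 4$ for every factor. For a genuine oriented Grassmannian one may assume $2 \leq l_{i} \leq k_{i}-l_{i}$, whence $l_{i}(k_{i}-l_{i}) \geq 2 \cdot 2 = 4$, with equality exactly at $\widetilde{G}(2,4;\mathbb{R})$; in each such case $\frac{4}{l_{i}(k_{i}-l_{i})} \leq 1$. I would also dispose of the degenerate factors $\widetilde{G}(1,k;\mathbb{R}) \cong S^{k-1}$, whose Pl\"{u}cker embedding is totally geodesic and hence contributes $\alpha_{i}^2 = 0$ and a ratio strictly below $1$; these cannot raise the maximum either. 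In all cases the bracketed maximum equals $1$, and the proposition follows.

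I do not expect a genuine obstacle here, since the computation rests on the already-proved Theorem \ref{forms formula} together with \cite{Cui2021}. The one point demanding care is notational: Theorem \ref{forms formula} is stated with its symbol $k_{i}$ denoting $\dim M_{i}$, whereas in this section $k_{i}$ is the ambient dimension of the Grassmannian. I would therefore state explicitly that the quantity dividing $\alpha_{i}^2$ in the formula is $\dim M_{i} = l_{i}(k_{i}-l_{i})$, not the integer $k_{i}$, so that invoking the earlier result produces no confusion.
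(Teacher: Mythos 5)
Your proposal is correct and takes essentially the same route as the paper: the paper simply cites $\alpha_{i}^2=4$ from \cite{Cui2021} and invokes Theorem \ref{forms formula}, leaving implicit the check that $\frac{4}{l_{i}(k_{i}-l_{i})}\leq 1$, which you supply via $2\leq l_{i}\leq k_{i}-l_{i}$. Your explicit handling of the notational clash between the two uses of $k_{i}$ and of the totally geodesic factors $\widetilde{G}(1,k;\mathbb{R})\cong S^{k-1}$ is a welcome clarification but not a different argument.
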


For the normal radius of $C$,
we have
\begin{theorem}
The normal radius of $C$ is $arccos(1-\lambda_{1}^2)$, where $\lambda_{1}$ is arranged such that $\lambda_{1}=min\{\lambda_{1},\ldots,\lambda_{m}\}$.
\end{theorem}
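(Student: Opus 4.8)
The plan is to mirror the computation of the normal radius carried out for $\prod_{i}\sqrt{k_{i}}G(l_{i},k_{i};\mathbb{F})$ in Proposition \ref{normal radius}, replacing the Hermitian projector model by the Pl\"{u}cker model. Since $M$ is an orbit of $SO(k_{1})\times\cdots\times SO(k_{m})$ acting diagonally on $\bigoplus_{i}\wedge^{l_{i}}\mathbb{R}^{k_{i}}$, it is homogeneous, so the normal radius is the same at every point and it suffices to work at the base point $P=(\lambda_{1}x_{1},\ldots,\lambda_{m}x_{m})$ with $x_{i}=e_{1}\wedge\cdots\wedge e_{l_{i}}$. Following \cite{kerckhove1994isolated} and \cite{Cui2021}, the normal radius is the angular distance to the nearest point $\tilde{P}\in M$ reached from $P$ by a geodesic chord meeting $M$ orthogonally at $P$; the strategy is to exhibit this $\tilde{P}$ factorwise and compute $\langle P,\tilde{P}\rangle$.

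First I would identify the tangent and normal spaces of a single Pl\"{u}cker factor. At $x_{i}$ the tangent space is spanned by the simple $l_{i}$-vectors obtained from $x_{i}$ by replacing exactly one included basis vector $e_{j}$ $(j\le l_{i})$ by an excluded one $e_{\alpha}$ $(\alpha>l_{i})$, while the normal space inside the unit sphere is spanned by those basis $l_{i}$-vectors whose index set differs from $\{1,\ldots,l_{i}\}$ in at least two slots. Hence a chord from $x_{i}$ to a simple unit $l_{i}$-vector $\tilde{x}_{i}$ is orthogonal to $M_{i}$ at $x_{i}$ precisely when the expansion of $\tilde{x}_{i}$ contains no index set differing from $\{1,\ldots,l_{i}\}$ in exactly one slot.

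The key step is the characterization of these return points. Writing $\tilde{x}_{i}$ in principal-angle normal form relative to $x_{i}$, say $\tilde{x}_{i}=\bigwedge_{s=1}^{l_{i}}(\cos\theta_{s}\,e_{s}+\sin\theta_{s}\,e_{l_{i}+s})$, one finds $\langle x_{i},\tilde{x}_{i}\rangle=\prod_{s}\cos\theta_{s}$, and the one-slot (tangent) components carry coefficients $\sin\theta_{s_{0}}\prod_{s\ne s_{0}}\cos\theta_{s}$. Demanding that all of these vanish forces, for every index with $\theta_{s_{0}}\ne 0$, some other angle to equal $\tfrac{\pi}{2}$; thus any genuine return $\tilde{x}_{i}\ne x_{i}$ has at least two principal angles equal to $\tfrac{\pi}{2}$ and therefore $\langle x_{i},\tilde{x}_{i}\rangle=\prod_{s}\cos\theta_{s}=0$. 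The nearest such point is $\tilde{x}_{i}=e_{l_{i}+1}\wedge e_{l_{i}+2}\wedge e_{3}\wedge\cdots\wedge e_{l_{i}}$, which exists once $2\le l_{i}$ and $k_{i}-l_{i}\ge 2$, i.e. the factor is a genuine oriented Grassmannian; sphere-type factors $l_{i}\in\{1,k_{i}-1\}$ reduce to the classical product-of-spheres computation of \cite{lawlor1991sufficient}. Consequently $\langle \hat{f}_{i}(x_{i}),\hat{f}_{i}(\tilde{x}_{i})\rangle=0$ at the nearest normal chord.

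Finally, such a $\tilde{P}$ differs from $P$ in a single factor, so $\langle P,\tilde{P}\rangle=\sum_{j\ne i}\lambda_{j}^{2}+\lambda_{i}^{2}\langle \hat{f}_{i}(x_{i}),\hat{f}_{i}(\tilde{x}_{i})\rangle=1-\lambda_{i}^{2}$. A chord changing several factors must, by the single-factor analysis, be a full normal return in each altered factor, giving $\langle P,\tilde{P}\rangle=1-\sum_{i\in S}\lambda_{i}^{2}$ over the nonempty set $S$ of altered factors; this is largest, hence the angle smallest, when $S$ is the single factor realizing $\min_{i}\lambda_{i}$. Since $\arccos(1-t)$ is increasing, the infimum over all normal chords is $\arccos(1-\lambda_{1}^{2})$ with $\lambda_{1}=\min_{i}\lambda_{i}$, which is the asserted normal radius. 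The main obstacle I anticipate is the last reduction: rigorously excluding closer returns produced by the between-factor radial directions $v_{i}=\xi_{i}-b_{i}x_{i}$ with $\sum_{i}\lambda_{i}b_{i}=0$, and confirming that every multi-factor normal chord is necessarily normal factor by factor, so that the single-factor return genuinely minimizes the chord angle.
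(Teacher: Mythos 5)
Your proposal is correct and follows essentially the same route as the paper: reduce to single-factor normal chords, use the fact that any genuine normal return within a Pl\"{u}cker-embedded factor is orthogonal to the base point (which the paper simply cites from \cite{Cui2021} and you re-derive via principal angles), and then observe that among factorwise-normal returns the inner product $1-\sum_{i\in S}\lambda_{i}^{2}$ is maximized by altering only the factor with smallest $\lambda_{i}$. The ``obstacle'' you flag at the end is already resolved by your own setup, since $T_{P}M$ splits as the direct sum of the factor tangent spaces, so normality of a chord at $P$ is automatically a factorwise condition.
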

\begin{proof}
For a fixed $i\in \{1,\ldots,m\}$, the nearest point to the origin $E_{i}:=e_{i,1}\wedge \cdots \wedge e_{i,l_{i}}$ of $M_{i}$ which belongs to the cone $C(M_{i})$ is shown to be one of the normal unit simple vectors in \cite{Cui2021}, denote it by $T_{i}$.

Then the nearest point to the origin $(\lambda_{1}E_{1},\cdots,\lambda_{m}E_{m})$ must be one of $P_{i}$, here $P_{i}$ are those points that the $i$-factors $\lambda_{i}E_{i}$ are replaced by $\lambda_{i}T_{i}$, the associated angle is \begin{equation}
cos \theta_{i}=\lambda_{1}^{2}+\cdots \lambda_{i-1}^{2}+\lambda_{i+1}^{2}+\cdots+\lambda_{m}^{2}=1-\lambda_{i}^{2}
\end{equation},
then the normal radius is the minimum value of $\theta_{i}(1\leq i\leq m)$.
\end{proof}

\begin{theorem}
Cones over minimal product of $Pl\ddot{u}cker \ embedded$, oriented real Grassmannians are area-minimizing.
\end{theorem}
\begin{proof}
There only exists one case for $7\leq dim M\leq 11$, it is $\widetilde{G}(2,4;\mathbb{R})\times \widetilde{G}(2,4;\mathbb{R})$, $(dim C,\alpha^2)=(9,8)$, then the estimated vanishing angle is $12.99^{\circ}$ by Lawlor's table, and the normal radius is $arccos(1-\frac{1}{2})=60^{\circ}$, it is area-minimizing;

For $dim M\geq 12$, let $k=dim C=dim M+1\geq 13$, we use the following formula given by Gary R. Lawlor:
\begin{equation}
tan(\theta_{2}(k,\alpha))< \frac{12}{k}tan\left(\theta_{2}\left(12,\frac{12}{k}\alpha\right)\right).
\end{equation}

Now, $\alpha=\sqrt{k-1}$, then $\frac{12}{k}\alpha\leq \sqrt{13}$, hence
\begin{equation}
tan\left(\theta_{2}(k,\alpha)\right)< \frac{12}{k}tan\left(\theta_{2}(12,\sqrt{13})\right)<\frac{2}{k}.
\end{equation}

The normal radius is $arccos(1-\frac{dim M_{1}}{k-1})$, we compare $2arctan\frac{2}{k}$ and $arccos(1-\frac{dim M_{1}}{k-1})$ as follows:

Note $2arctan\frac{2}{k}<\frac{4}{k}$, $1-cos\frac{4}{k}<\frac{8}{k^2}$ and $dim M_{1}\geq 4$, then the proof is followed by the above relations.
\end{proof}

\begin{remark}
The minimal product is suitable for various mixed cases which the factors could be any concrete embedded minimal submanifolds in spheres or the spheres themselves. For the cases of products of Grassmannians, there is a very challenging question here on the complete classification of the associated cones, we list some other cones which deserve further considerations:

~(1)Whether the cones of dimension less than $7$ are area-minimizing, stable or unstable, similar to the cones over products of spheres? Except the products of spheres(i.e. all the factors are belong to $\mathbb{R}P^{1}, \mathbb{C}P^{1},\mathbb{H}P^{1}$), all the others cannot be proved area-minimizing directly by Curvature Criterion in \cite{lawlor1991sufficient}, such examples are: $S^{1}\times \mathbb{R}P^2$, $S^1 \times G(2,4;\mathbb{R})$, the product of two Veronese maps $\mathbb{R}P^2 \times \mathbb{R}P^2$, and $\mathbb{R}P^2 \times \mathbb{R}P^3$, etc.

~(2)Cones of dimension $7$ which have reduced factors(not all), and the minimum polynomials are $F(t)$, could these cones be area-minimizing?

~(3)There are some cones of dimension bigger than $7$ which it owns mixed factors which could be $G(n,m;\mathbb{F})$, $\mathbb{O}P^2$, or $Pl\ddot{u}cker$ embedded oriented real Grassmannians, etc.
\end{remark}

\vspace{0.3cm}
\noindent\textbf{Acknowledgments}. This work is supported by NSFC No.11871450. We would like to thank Professor YongSheng Zhang for valuable suggestions.

\end{document}